\def\tp{\mathsf{T}}
\renewcommand{\u}{\boldsymbol{u}}
\newcommand{\x}{\boldsymbol{x}}
\newcommand{\y}{\boldsymbol{y}}
\newcommand{\z}{\boldsymbol{z}}
\newcommand{\w}{\boldsymbol{w}}
\newcommand{\Nor}{\mathcal{N}}
\newcommand{\N}{\boldsymbol{N}}
\newcommand{\Nat}{\mathbb{N}}
\newcommand{\Tr}{\mathrm{Tr}}
\newcommand{\XTrue}{\mathcal{X}}
\newcommand{\UTrue}{\mathcal{U}}
\newcommand{\Alqr}{\mathcal{A}}
\newcommand{\Blqr}{\mathcal{B}}
\newcommand{\Glqr}{\mathcal{G}}
\newcommand{\Dist}{\mathcal{D}}
\newcommand{\Lower}{\mathfrak{L}}
\newcommand{\Upper}{\mathfrak{U}}
\newtheorem{remark}{\bfseries Remark}
\newtheorem{example}{\bfseries Example}
\newtheorem{proposition}{\bfseries Proposition}
\newtheorem{theorem}{\bfseries Theorem}
\newtheorem{corollary}{\bfseries Corollary}
\newtheorem{lemma}{\bfseries Lemma}
\author{Andrew Lamperski, Khem Raj Ghusinga, and Abhyudai Singh}
\title{Analysis and Control of Stochastic Systems using Semidefinite Programming over Moments}
\begin{document}
\maketitle

\begin{abstract}
  This paper develops a unified methodology for probabilistic analysis
  and optimal control design for jump diffusion processes defined by
  polynomials. 
  For such systems, the evolution of the moments of the state can be
  described via a system of linear ordinary
  differential equations. Typically, however, the moments are not
  closed and an infinite system of equations is required to compute
  statistical moments exactly. 
  Existing methods for stochastic analysis,
  known as closure methods, focus on approximating this infinite system
  of equations with a finite dimensional system. This work develops an
  alternative approach in which the higher order terms, which are
  approximated in closure methods, are viewed as inputs to a
  finite-dimensional linear control system. Under this interpretation, upper
  and lower bounds of statistical moments can be computed via convex linear
  optimal control problems with semidefinite constraints. For analysis
  of steady-state distributions, this optimal control problem reduces
  to a static semidefinite program. These same optimization problems
  extend automatically to stochastic optimal control problems. For
  minimization problems, the methodology leads to guaranteed lower
  bounds on the true optimal value. 
  Furthermore, we show how an approximate optimal
  control strategy can be constructed from the solution of the
  semidefinite program.
  The results are illustrated using numerous examples. 
\end{abstract}

\section{Introduction}
Stochastic dynamics and stochastic optimal control problems arise in a variety
of domains such as finance
\cite{oksendal03,oksendalapplied2009,malliaris82,hanson2007applied}, biology
\cite{allen07,hanson2007applied}, physics \cite{allen07}, robotics
\cite{theodorougeneralized2010},
probabilistic inference
\cite{roberts1996exponential,girolami2011riemann}, and stochastic approximation
algorithms \cite{kushnerstochastic2003}. In stochastic analysis
problems, statistics such as the average size of a biological population
\cite{naasell03,singhderivative2007} or the average density of a gas
\cite{Kuehn16} are desired. Aside from special stochastic processes,
such as linear Gaussian systems, these statistics can rarely be
computed analytically. Typically, estimates of the statistics are
either found through Monte Carlo simulation or approximation
schemes. In stochastic optimal control, a controller attempts to
achieve a desired behavior in spite of noise. For example, a robot
must make reaching movements in the face of noisy actuation
\cite{theodorougeneralized2010}, or the harvest of a fishery must be
managed despite random fluctuations in the fish supply
\cite{alvarezoptimal1998,lunguoptimal1997}. Aside from linear
systems with quadratic costs, few stochastic optimal control problems can be solved
analytically. 

\subsection{Contribution}
This paper defines a unified methodology for approximating both
analysis and control problems via convex optimization methods.
The paper focuses on continuous-time jump diffusions defined by
polynomials. Both finite-horizon and steady-state problems are
considered. In the finite-horizon case, approximations are achieved by
solving an auxiliary linear optimal control problem with semidefinite
constraints. In the steady-state case, this problem reduces to a
static semidefinite program (SDP).

In the case of stochastic analysis problems, our method can
be used to provide provable upper and lower bounds on statistical quantities of interest. For
analysis problems, the method can be viewed as an alternative to moment
closure methods  \cite{socha2008linearization,
    Kuehn16,naasell03,SinghHespanhaLogNormal,SinghHespanhaDM,soltani2015conditional}, which give point approximations to moments, as
  opposed to bounds.

For
stochastic optimal control problems, the auxiliary optimal control
problem is a convex relaxation of the original problem. In the case of
stochastic minimization problems, the relaxation gives provable lower
bounds on the true optimal value. Furthermore, we provide a method for
constructing feasible controllers for the original system. The lower
bound from the relaxation can then be used to compute the optimality
gap of the controller. 

This work is an extension of the conference paper
\cite{lamperski2016stochastic}. That paper only considered
the case of finite-horizon optimal control for stochastic differential
equations. The current work considers both finite-horizon and
steady-state problems. Additionally, the methods are extended to include
jump processes. Furthermore, this paper treats stochastic analysis and
stochastic control problems in a unified manner.

\subsection{Related Work}

For uncontrolled problems, the work is closely related to the moment
closure problems discussed briefly above. The moment closure problem
arises when dynamics of one moment depend on higher order moments,
and so moments of interest cannot be represented by a finite
collection of equations. Moment closure methods utilize probabilistic
or physical principles to replace higher order moments with
approximate values. This work, in contrast, uses the higher order
moments as inputs to an auxiliary linear optimal control problem. A
special case of this idea corresponding to stationary moments of stochastic
differential equations was independently studied in
\cite{KuntzStatMoments16}. 

Most methods for stochastic optimal control focus on dynamic
programming \cite{bertsekasdynamic1995,flemingcontrolled2006} or
approximate dynamic programming
\cite{bertsekasdynamic2012}. More closely related to this work,
however, are relaxation methods. Unlike approximate dynamic
programming, these methods provide bounds on the achievable optimal
values. The most closely related methods focus on occupation measures
\cite{lasserrenonlinear2008,bhatt1996occupation,vinter1993convex}. These
works rely on the general insight that a convex relaxation of an
optimization problem can be obtained by examining randomized
strategies \cite{lasserre2001global}. Such relaxation
methods apply to stochastic optimal control problems, but it is not
clear that they can be used for analysis problems. In contrast, our
relaxation methodology applies to 
both cases. A more removed relaxation method is studied in
\cite{rogers2007pathwise,brown2014information}. These works use
duality to relax the constraint that controllers depend causally on
state information. Also related is the work of
\cite{jumarie1995practical,jumarie1996improvement} which uses a
combination of moment dynamics and deterministic optimal control to
find approximate solutions to stochastic optimal control
problems. This work, however, only considers systems with closed
moments. Furthermore, this work assumes a fixed parametric form for the
controller, and does not obtain a relaxation of the original
stochastic control problem. 

\subsection{Paper Organization}
Section~\ref{sec:prob} defines the general problems of interest. In
this section, a collection of running examples is also
introduced. Background results on stochastic processes are given in
Section~\ref{sec:background}. The main results along with numerical
examples are given in
Section~\ref{sec:results}. Finally, the conclusions are given in
Section~\ref{sec:conclusion}.

\section{Problems}
\label{sec:prob}

This paper develops a methodology for bounding the moments of
stochastic dynamical systems and stochastic control problems in
continuous-time. This section introduces the classes of problems for
which we can apply our new
technique. Subsection~\ref{sec:AnalysisProb} describes the
uncontrolled stochastic systems, Subsection~\ref{sec:ControlProb}
describes the optimal stochastic control problems, and
Subsection~\ref{sec:ss} describes the steady state variants of these
problems. Throughout the section, we will introduce running examples
that illustrate how each of the problems specializes to concrete
systems. 

\subsection{Notation}

Random variables are denoted in bold. 

If $\x$ and $\y$ are identically distributed random variables, we
denote this by $\x \stackrel{d}{=} \y$.

For a stochastic process, $\x(t)$, $d\x(t)$ denotes the increment
$\x(t+dt) - \x(t)$. 

If $\x$ is a random variable, its expectation is denoted by $\langle
\x \rangle$. 

We denote that $\x$ is a Gaussian random variable with mean $\mu$ and
covariance $\Sigma$ by $\x\sim \Nor(\mu,\Sigma)$. More generally, we
write $\x\sim \Dist$ to denote that $\x$ is distributed according to
some distribution $\Dist$.

The notation $M\succeq 0$ denotes that $M$ is a symmetric, positive
semidefinite matrix. If $v$ is a vector, then $v\ge 0$ indicates that
all elements of $v$ are non-negative. 

The set of non-negative integers is denoted by $\Nat$.

\subsection{Bounding Moments of Stochastic Dynamic Systems}
\label{sec:AnalysisProb}

In this subsection, we pose the problem of providing bounds to moments
of stochastic processes. The processes take the form of
continuous-time jump diffusions with non-homogeneous jump rates:
\begin{equation}
\label{eq:uncontrolledDynamics}
d\x(t) = f(\x(t)) dt + g(\x(t)) d\w(t) +
\sum_{i=1}^J (\phi_i(\x(t)) - \x(t))d\N_i(\x(t)).
\end{equation}

Here $\x(t)$ is the state and $\w(t)$ is a Brownian motion with $\w(t)
\sim \Nor(0,tI)$. The terms $d\N_i(\x(t))$ are increments of independent
non-homogeneous Poisson process with jump intensity $\lambda_i(\x(t))$,
so that
\begin{equation}
\label{eq:jumpIncrements}
d\N_i(\x(t)) = \begin{cases}
1 & \textrm{ with probability } \lambda_i(\x(t)) dt
\\
0 & \textrm{ with probability } 1 - \lambda_i(\x(t))dt.
\end{cases}
\end{equation}
It will be assumed that the initial condition is either a constant,
$\x(0) = x_0$, or that $\x(0)$ is a random variable with known
moments.

We assume that all of the functions, $f$, $g$, $\phi_i$ and
$\lambda_i$, are polynomials. Let $c$ and $h$ 
be polynomial functions.

\begin{remark}
Recall that $d\x(t) = \x(t+dt) - \x(t)$ and $d\w(t) \sim \Nor(0,dt I
)$. Thus, the equations 
\eqref{eq:uncontrolledDynamics} and 
\eqref{eq:jumpIncrements} can be used to simulate the system using an
Euler-Maruyama method. However, as written, the equations do not
uniquely specify the behavior of the Poisson processes. In particular, 
over the interval $[t,t+dt]$, more than one of the processes may have
a jump. In this scenario, it 
is not clear which of the jumps occurs first. This ambiguity can be
resolved by instead simulating the following identically distributed
jump process:
\begin{subequations}
  \nonumber
  \begin{align}
    \sum_{i=1}^J \left(
      \phi_i(\x(t)) - \x(t) 
    \right) d\N_i(\x(t)) & \stackrel{d}{=} \z(t) d\N(\x(t)) \\
    d\N(\x(t)) &= \begin{cases}
      1 & \textrm{ with probability } \sum_{i=1}^J \lambda_i(\x(t))dt \\
      0 & \textrm{ with probability } 1 - \sum_{i=1}^J
      \lambda_i(\x(t))dt 
      \end{cases}\\
      \z(t) &= \phi_j(\x(t)) - \x(t) \quad \textrm{with probability}
      \quad
      \frac{\lambda_j(\x(t))}{\sum_{i=1}^J \lambda_i(\x(t))}.
  \end{align}
\end{subequations}
\end{remark}

We assume that there is some quantity of interest that can be
described by
\begin{equation}
  \label{eq:analysisObj}
  \left\langle
  \int_0^T c(\x(t))dt + h(\x(T))
  \right\rangle,
\end{equation}
where $c$ and $h$ are polynomials.
Such a quantity could represent, for example, total energy expenditure
over an interval, or variance of the state at the final time. 

Our first problem, solved in Subsection~\ref{sec:resBounding}, is to 
give a systematic method for
computing an increasing sequence of lower bounds, $\Lower_i$ and a
decreasing sequence  
of upper bounds $\Upper_i$ that satisfy:
\begin{equation}
\label{eq:analysisBounds}
\Lower_0 \le \cdots \le \Lower_i \le 
\left\langle 
\int_0^T c(\x(t)) dt + h(\x(T))
\right\rangle
\le \Upper_i \le \cdots \le \Upper_0.
\end{equation}

Below we will give a few running examples of systems defined by
\eqref{eq:uncontrolledDynamics}, \eqref{eq:jumpIncrements} and
\eqref{eq:analysisObj}. 

\begin{example}[Stochastic Logistic Model]
Consider the stochastic logistic model studied in
\cite{singhderivative2007}. This model is described by:
\begin{equation}
  \label{eq:stochLog}
    \x(t+dt) = \begin{cases}
      \x(t)+1 & \textrm{ with probability } \lambda_1(\x(t)) dt \\
      \x(t)-1 & \textrm{ with probability } \lambda_2(\x(t)) dt \\
      \x(t) & \textrm{ otherwise},
      \end{cases}
\end{equation}
where the jump intensities are given by:
\begin{equation}
\label{eq:stochLogIntensities}
  \lambda_1(x) = a_1 x - b_1 x^2, \qquad \lambda_2(x) = a_2 x + b_2 x^2.
\end{equation}
Here we assume that the coefficients satisfy:
\begin{equation}
\Omega := \frac{a_1}{b_1} \in \Nat, \quad 
a_1 >0, \quad
a_2 >0, \quad
b_1 >0, \quad
b_2 \ge 0.
\end{equation}
These assumptions guarantee that if $\x(0) \in \{0,1,2,\ldots,\Omega\}$
then $\x(t) \in \{0,1,2,\ldots,\Omega\}$ for all $t \ge 0$. We will analyze
the moments of this system via semidefinite programming (SDP). We will see
that the bounds on the process can be incorporated as constraints in
the SDP.

This system is a special case of \eqref{eq:uncontrolledDynamics} with
$f=0$, $g=0$, $\phi_1(x) = x+1$, $\phi_2(x)=x-1$, and $\lambda_i$
defined as above. 

\end{example}

\subsection{Stochastic Optimal Control}
\label{sec:ControlProb}

For optimal control, the systems are similar, except now in addition
to the state process, $\x(t)$, there is also an input process,
$\u(t)$. 

The general problem has the form:

\begin{subequations}
\label{eq:optControl}
\begin{align}
  \label{eq:cost}
& \textrm{minimize} && \left\langle
\int_0^T c(\x(t),\u(t)) dt + h(\x(T),\u(T)) 
\right\rangle \\
& \textrm{subject to} && 
\label{eq:controlledDynamics}
d\x(t) = f(\x(t),\u(t)) dt + g(\x(t),\u(t)) d\w(t) +
\sum_{i=1}^J \left(\phi_i(\x(t),\u(t)) - \x(t)\right) d\N_i(\x(t),\u(t)) \\
&&& d\N_i(\x(t),\u(t)) = \begin{cases}
1 & \textrm{ with probability } \lambda_i(\x(t),\u(t)) dt 
\\
0 & \textrm{ with probability } 1 - \lambda_i(\x(t),\u(t))dt 
\end{cases}
 \\
 \label{eq:controlledInequality}
&&& b_i(\x(t),\u(t)) \ge 0 \quad \textrm{for} \quad i\in
\{1,\ldots,n_b\} \\
&&& \x(0) \sim \Dist \\
&&& \u(t) \textrm{ is admissible.}
\end{align}
\end{subequations}
Here $\x(0)\sim \Dist$ means that $\x(0)$ has some specified initial
distribution, $\Dist$. In the problems studied, $\Dist$ will either be
a Dirac $\delta$ function, specifying a constant initial condition, or
else $\Dist$ will be a distribution with known moments.

By admissible, we mean that $\u(t)$ is measurable with respect to the
filtration $\mathcal{F}(t)$ generated by $\{\x(\tau): 0\le \tau \le
t\}$. 
As with the uncontrolled case, we assume that all of the systems are
polynomial. 

Let $V^*$ be the optimal cost. 
The method to be described in Subsection~\ref{sec:resBounding} 
gives a semidefinite
programming
method for computing an increasing sequence of lower bounds, $\Lower_i$ on
the optimal cost:
\begin{equation}
\label{eq:synthesisBounds}
\Lower_0 \le \cdots \le\Lower_i \le V^*.
\end{equation}

In Subsection~\ref{sec:controller}, we show how to use the result of
the semidefinite programs to produce feasible controllers. The cost 
associated with any feasible controller will necessarily be at least
as high as the optimal cost, and so the corrsponding controller gives
an upper bound $\Upper_i$. So, we must have that $\Lower_i \le V^* \le \Upper_i$. So,
if this gap is small, then the controller must be nearly optimal.

\begin{example}[LQR]
  \label{ex:lqr}
  We present the linear quadratic regulator problem because it is
  well-known, and it fits into the general framework of our
  problem. Unlike most of the problems studied, all of the associated
  optimization problems can be solved analytically in this case.
\begin{subequations}
  \label{eq:lqr}
\begin{align}
&\textrm{minimize} && \left\langle 
\int_0^T \left(\x(t)^\top Q \x(t) 
+ \u(t)^\top R \u(t) 
                      \right) dt \right\rangle  + \left\langle\x(T)^\top \Psi \x(T)
\right\rangle \\
& \textrm{subject to} && d\x(t) = (\Alqr \x(t) + \Blqr \u(t)) dt 
+ \Glqr d\w(t)  \\
&&& \x(0) \sim \Nor(0,\Sigma).
\end{align}
\end{subequations}

Here $\x(0)\sim\Nor(0,\Sigma)$ means that $\x(0)$ is a Gaussian random
variable with mean zero and covariance $\Sigma$.

We examine this problem because the SDP from
Subsection~\ref{sec:resBounding} and the controller construction
method from Subsection~\ref{sec:controller} give exact solutions in
this case. In other cases, they only provide lower and upper bounds,
respectively. 

\end{example}

\begin{example}[Fishery Management]
  \label{ex:fishery}
Consider the modified version of the
optimal fisheries management from
\cite{alvarezoptimal1998,lunguoptimal1997}:
\begin{subequations}
\begin{align}
& \textrm{maximize} && \left<
\int_0^T
\u(t)
dt
\right> \\
& \textrm{subject to} &&  d\x(t) = \left(\x(t)-\gamma \x(t)^2 -
                         \u(t)\right)dt 
+\sigma \x(t) d\w(t) \\
&&& \x(0) = x_0 \\
&&& \x(t) \ge 0 \\
&&& \u(t) \ge 0.
\end{align}
\end{subequations}
Here $\x(t)$ models the population in a fishery and $\u(t)$ models the
rate of harvesting. As in the earlier works, a constraint that
$\x(t)\ge 0$ is required to be physically meaningful. Also, without
this constraint, the optimal strategy would be to set $\u(t) =
+\infty$. The constraint that $\u(t) \ge 0$ encodes the idea that
fish are only being taken out, not put into the fishery. 

The primary difference between this formulation and that of
\cite{alvarezoptimal1998} and \cite{lunguoptimal1997}, is that the
cost is not discounted, and operates over a fixed, finite horizon. 

Note that this is a maximization problem, but this is
equivalent to minimizing the objective multiplied by $-1$. 
\end{example}

\begin{example}[Randomly Sampled Feedback]
  Consider a system in which a state $\x_1(t)$ is controlled by a
  sampled feedback controller, where the sampling times are Poisson
  distributed. A simple version of this system is described below. 
  \begin{subequations}
    \label{eq:sampledSys}
    \begin{align}
      d\x_1(t) &= \x_2(t) dt + d\w(t) \\
      x_2(t+dt) &=
                  \begin{cases}
                    \u(t) & \textrm{ with probability } \lambda dt \\
                    \x_2(t) & \textrm{ with probability } 1-\lambda dt.
                  \end{cases}
    \end{align}
  \end{subequations}
  So, here the state $x_2(t)$ holds the most recent sample of the
  input $\u(t)$. A natural cost for this problem is of the form:
  \begin{equation}\label{eq:sampledCost}
    \left \langle \int_0^T \left(Q \x_1(t)^2 + R \x_2(t)^2\right) dt \right \rangle.
  \end{equation}
      
\end{example}

\subsection{Steady State}
\label{sec:ss}

Both the analysis problem and the control problem from Subsections~\ref{sec:AnalysisProb} and
\ref{sec:ControlProb}, respectively,  deal with computing bounds on
the moments of a 
stochastic process over a finite time horizon. The results for these
problems can be modified in a straight-forward manner to bound
steady-state moments of these problems. In this case, for example, the
optimal control problem becomes:
\begin{subequations}
\label{eq:ssControl}
\begin{align}
& \textrm{minimize} && \lim_{T\to\infty}\left\langle
h(\x(T),\u(T)) 
\right\rangle \\
& \textrm{subject to} && 
\eqref{eq:controlledDynamics} - \eqref{eq:controlledInequality}
\end{align}
\end{subequations}

A special case of this problem was studied using similar methods in
\cite{KuntzStatMoments16}. For a detailed discussion of the
differences between those results and the present results, see 
Remark~\ref{rem:specialCase}, below.

\begin{example}[Jump Rate Control]
  \label{ex:rateControl}
  As another variation, we will consider the case of controlling the
  rate of jumps in a control system. A simple model of this takes
  the form:
  \begin{equation}
    \label{eq:controlledJump}
    \x(t+dt) = \begin{cases}
      \x(t) + d\w(t) & \textrm{ with probability } 1 - \u(t) dt \\
      0 & \textrm{ with probability } \u(t) dt.
      \end{cases}
    \end{equation}

    To get a well-posed optimal control problem, we will penalize a
    combination of $\x(t)$ and $\u(t)$, and further constrain $\u(t)$
    to remain in a compact interval. The formal problem can be stated
    as a special case of \eqref{eq:ssControl}:
    \begin{subequations}
      \label{eq:controlledJumpProb}
      \begin{align}
        & \textrm{minimize} && \lim_{T\to\infty} \left\langle Q \x(T)^2
                               + R \u(T) \right\rangle
        \\
        & \textrm{subject to} && d\x(t) = d\w(t)  -\x(t) d\N(t) \\
        &&& d\N(t) = \begin{cases}
          1 & \textrm{ with probability } \u(t) dt \\
          0 & \textrm{ with probability } 1- \u(t)dt
        \end{cases}
        \\
        &&& 0 \le \u(t) \le \Omega.
      \end{align}
    \end{subequations}
    The constraint that $\u(t) \ge 0$ ensures that the jump rate is
    always non-negative. The upper bound constraint, $\u(t) \le
    \Omega$ is more subtle, and is required to ensure finite jump
    rates. To see why it is required, note that the average-cost
    Hamilton-Jacobi-Bellman (HJB) equation for this problem is given by:
    \begin{equation}
      \label{eq:jumpHJB}
      V^* = \min_{0\le u \le \Omega} \left[
        Q x^2 + Ru + \frac{1}{2} \frac{\partial^2 v(x)}{\partial x^2} +
        (v(0) - v(x))u
        \right].
      \end{equation}
      Here $V^*$ is the optimal value of \eqref{eq:controlledJumpProb}
      and $v(x)$ is the differential cost-to-go function. Analysis of
      \eqref{eq:jumpHJB} shows that an optimal strategy must satisfy:
      \begin{equation}
        \label{eq:optJumpStrategy}
        u = \begin{cases}
          0 & \textrm{ if } R + v(0) - v(x) > 0 \\
          \Omega & \textrm{ if } R + v(0) - v(x) < 0.
          \end{cases}
        \end{equation}

        If no upper bound on $u$ were given, then the optimal strategy
        would be to set $u = +\infty$ if $R+v(0)-v(x) <0$. This
        reduces to an event-triggered control strategy,
        \cite{heemels2012introduction,tabuada2007event} in which an 
        instantaneous jump occurs whenever the state crosses the
        boundary $R+v(0)-v(x)=0$. With the upper bound, the strategy
        approximates an event triggered strategy, but in this case,
        rather than an instantaneous jump, the jump rate goes as high
        as possible when the boundary is crossed. This ensures that a
        jump will occur with high probability.

        It should be noted that while some general properties of the
        optimal strategy can be deduced by examining
        \eqref{eq:jumpHJB}, solving this HJB equation appears to be
        non-trivial. First, it requires knowing the true optimal value
        $V^*$ and then solving a non-smooth partial differential
        equation for $v(x)$. 
\end{example}

\section{Background Results}
\label{sec:background}

This section presents some basic results on expressing dynamics of
moments using linear differential
equations. Subsection~\ref{sec:generators} reviews well-known results on
It\^o's formula and generators for jump processes. These results can
be used to see how functions of a stochastic process evolve
over time. 
Subsection~\ref{sec:momentTraj} specializes the results from
Subsection~\ref{sec:generators} to the problems studied in this paper,
as defined in Section~\ref{sec:prob}. In particular, we will see
that moments of the stochastic process are the solutions of an auxiliary
linear control system. Furthermore, objectives and constraints on
the original system can be encoded using linear mappings of the state
and input of the auxiliary linear
control system. Section~\ref{sec:auxExamples} shows how the auxiliary
linear control
system and the corresponding linear mappings from
Subsection~\ref{sec:momentTraj} can be found in examples. 

\subsection{It\^o's Formula and Generators}
\label{sec:generators}

In this paper, we will examine the behavior of moments of a stochastic
process. The dynamics of the moments can be derived using standard
tools from stochastic processes. For more details, see \cite{oksendal03,hanson2007applied}.

Consider the dynamics from \eqref{eq:controlledDynamics}. Note that
\eqref{eq:uncontrolledDynamics} is a special case of
\eqref{eq:controlledDynamics} with all coefficients of $\u(t)$ set to
zero. The It\^o formula for jump processes 
implies that for any smooth scalar-valued function $h(\x(t))$, the increment is
given by
\begin{multline}\label{eq:ito}
dh(\x(t)) = \frac{\partial h(\x(t))}{\partial x} \left(
f(\x(t),\u(t)) dt + g(\x(t),\u(t)) d\w(t) 
\right)
+\frac{1}{2} \Tr \left(
\frac{\partial^2 h(\x(t))}{\partial x^2} g(\x(t),\u(t)) 
g(\x(t),\u(t))^\top
\right)dt \\
+ \sum_{i=1}^J \left(h(\phi_i(\x(t),\u(t))) - h(\x(t))\right) d\N_i(\x(t),\u(t)).
\end{multline}

Taking expectations and dividing both sides by $dt$ gives the
differential equation:
\begin{multline}
\label{eq:generatorDiffEq}
\frac{d}{dt} \langle h(\x(t)) \rangle
=
\left\langle \frac{\partial h(\x(t))}{\partial x} 
f(\x(t),\u(t)) 
+\frac{1}{2} \Tr \left(
\frac{\partial^2 h(\x(t))}{\partial x^2} g(\x(t),\u(t)) 
g(\x(t),\u(t))^\top
\right) \right\rangle 
\\
+
\left\langle
 \sum_{i=1}^J \left(h(\phi_i(\x(t),\u(t))) - h(\x(t))\right) \lambda_i(\x(t),\u(t))
\right\rangle.
\end{multline}
The differential equation can be expressed compactly as $\frac{d}{dt}
\langle h(\x(t))\rangle = \langle Lh(\x(t),\u(t)) \rangle$, where $L$ is
known as the \emph{generator} of the jump process:
\begin{equation}
\label{eq:generator}
Lh(x,u)=\frac{\partial h(x)}{\partial x} 
f(x,u) 
+\frac{1}{2} \Tr \left(
\frac{\partial^2 h(x)}{\partial x^2} g(x,u) 
g(x,u)^\top
\right)
+
 \sum_{i=1}^J \left(h(\phi_i(x,u)) - h(x)\right) \lambda_i(x,u).
\end{equation}
Say that $h(\x(t))$ is a polynomial. 
Then, since all of the functions, $f$, $g$, $\phi_i$, and
$\lambda_i$ are polynomials, term inside the expectation  on the right
hand side of \eqref{eq:generatorDiffEq} must also be a polynomial.

\subsection{The Auxiliary Linear System}
\label{sec:momentTraj}

This subsection specializes the results from the previous subsection
to the problems defined in Section~\ref{sec:prob}. In particular, we
will see how the dynamics, constraints, and costs can all be studied
in terms of an auxiliary linear control system. It should be noted
that the auxiliary control system has the same basic form regardless
of whether the original system was an uncontrolled system, as
introduced in Subsection~\ref{sec:AnalysisProb}, or a stochastic
optimal control problem, as introduced in
Subsection~\ref{sec:ControlProb}.

The state of our auxiliary control system $\XTrue(t)$ will be a
collection of moments:
\begin{equation}
\label{eq:bigState}
\XTrue(t) = 
\begin{bmatrix}
1\\
\left\langle \x(t)^{(m_1)} \right\rangle\\
\left\langle \x(t)^{(m_2)} \right\rangle\\
\vdots \\
\left\langle \x(t)^{(m_N)} \right\rangle
\end{bmatrix}.
\end{equation}
Here, we use the notation
that if $m=(m_1,m_2,\ldots,m_n)$, then  $\x(t)^{(m)}$ denotes the product:
\begin{equation}
\x(t)^{(m)} = (\x_1(t))^{m_1} (\x_2(t))^{m_2} \cdots (\x_n(t))^{m_n}.
\end{equation}
Note that $1 =
\left\langle \x(t)^{(0)} \right\rangle$, in $\XTrue(t)$. For
simplicity, $\XTrue(t)$ will often include all moments of $\x(t)$ up
to some degree. 

The input to the auxiliary control system will be another collection
of moments:
\begin{equation}
\UTrue(t) = \begin{bmatrix}
\left\langle \x(t)^{(q_1)}\u(t)^{(r_1)} \right\rangle \\
\left\langle \x(t)^{(q_2)}\u(t)^{(r_2)} \right\rangle \\
\vdots \\
\left\langle \x(t)^{(q_P)}\u(t)^{(r_P)} \right\rangle
\end{bmatrix}.
\end{equation}
Note that in the case of an
uncontrolled system, the moments from $\UTrue(t)$ will all take the
form $\left\langle \x(t)^{(q_i)}\right \rangle$ for some moment not
appearing in $\XTrue(t)$.
The exact moments which $\UTrue(t)$ contains will be chosen so that
the Lemmas~\ref{lem:dynamics} -- \ref{lem:vec} below hold.

\begin{lemma}
  \label{lem:dynamics}
  {\it
    Consider the dynamics from \eqref{eq:controlledDynamics} and let
    $\XTrue(t)$ be the vector of moments defined in \eqref{eq:bigState}. There
    exist constant matrices, $A$, and $B$, such that 
    \begin{equation}
      \label{eq:linearDynamics}
      \dot \XTrue(t) = A \XTrue(t) + B\UTrue(t).
    \end{equation}
}
\end{lemma}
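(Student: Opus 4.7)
The plan is to derive the claim directly from the generator formula \eqref{eq:generator}, treating the bookkeeping of monomials carefully. The first row of $\XTrue(t)$ is the constant $1$, whose time derivative is zero, so the top row of both $A$ and $B$ will be zero. For each remaining entry $\langle \x(t)^{(m_j)}\rangle$, I would apply \eqref{eq:generatorDiffEq} with the polynomial choice $h(x) = x^{(m_j)}$ to obtain
\begin{equation*}
\frac{d}{dt}\langle \x(t)^{(m_j)}\rangle = \langle L(\x(t)^{(m_j)},\u(t))\rangle.
\end{equation*}
Because $f$, $g$, $\phi_i$, $\lambda_i$ are all polynomials in $(x,u)$, and because $h$ is a monomial, each of the three pieces of \eqref{eq:generator} --- the drift term $\frac{\partial h}{\partial x}f$, the diffusion term $\tfrac{1}{2}\mathrm{Tr}(\frac{\partial^2 h}{\partial x^2} g g^\top)$, and the jump term $\sum_i(h(\phi_i)-h)\lambda_i$ --- is a polynomial in $(x,u)$ with constant coefficients that depend only on the problem data, not on $t$.

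Next, I would expand $L(x^{(m_j)},u)$ into a finite sum $\sum_k \alpha_{jk}\, x^{(q_k)} u^{(r_k)}$ and take expectations term by term using linearity, yielding
\begin{equation*}
\frac{d}{dt}\langle \x(t)^{(m_j)}\rangle = \sum_k \alpha_{jk}\,\bigl\langle \x(t)^{(q_k)}\u(t)^{(r_k)}\bigr\rangle.
\end{equation*}
Each moment appearing on the right is either (i) already one of the entries of $\XTrue(t)$ (these are the moments with $r_k=0$ whose exponent $q_k$ equals some $m_i$, including $q_k=0$ which produces the constant $1$), or (ii) not in $\XTrue(t)$. The construction of $\UTrue(t)$ is precisely to collect all moments of type (ii) that arise, across all $j=1,\ldots,N$, into a single vector. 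Splitting the sum accordingly identifies the $j$-th row of $A$ from the coefficients in front of the type-(i) moments and the $j$-th row of $B$ from those in front of the type-(ii) moments. Assembling the rows gives the desired constant matrices $A$ and $B$, and the identity \eqref{eq:linearDynamics} follows.

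There is really no hard analytical step here; the argument is essentially a bookkeeping exercise once It\^o's formula and the generator \eqref{eq:generator} are in hand. The one point that requires care is ensuring that the set $\UTrue(t)$ can be chosen to be finite: this is guaranteed because $\XTrue(t)$ contains only finitely many moments $\langle \x(t)^{(m_j)}\rangle$ and, since $f,g,\phi_i,\lambda_i$ have fixed (finite) degree, each application of $L$ to a monomial produces only finitely many monomials. The union of these, minus the entries already represented in $\XTrue(t)$, defines $\UTrue(t)$. I would close by remarking that the same construction handles the uncontrolled case of \eqref{eq:uncontrolledDynamics} automatically, since then every $r_k$ is zero and $\UTrue(t)$ simply collects the unclosed pure-state moments that drive the moment dynamics.
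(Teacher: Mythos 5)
Your proposal is correct and follows the same route as the paper: the paper's proof is a one-line appeal to the generator formula \eqref{eq:generator} together with the stipulation that any moments appearing on the right-hand side but not in $\XTrue(t)$ are placed into $\UTrue(t)$, which is exactly the bookkeeping you carry out in detail. Your added remarks on finiteness of $\UTrue(t)$ and the reduction to the uncontrolled case are consistent with, and merely elaborate on, the paper's argument.
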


\begin{IEEEproof}
This is an immediate consequence of \eqref{eq:generator}, provided
that all of the moments on the right hand side that do not appear in
$\XTrue(t)$ are contained in $\UTrue(t)$. 
\end{IEEEproof}

\begin{remark}
  The result from Lemma~\ref{lem:dynamics} is well known, and commonly
  arises in works on \emph{moment closure} for stochastic
  dynamic systems
  \cite{socha2008linearization,
    Kuehn16,naasell03,SinghHespanhaLogNormal,SinghHespanhaDM,soltani2015conditional}.
  We say that a stochastic process has non-closed moments when the
  dynamics of a given moment  depend on a higher order
  moments. In this case, infinitely many differential equations are
  required to describe any one moment exactly. Moment closure methods
  approximate this infinite set of differential equations
  with a finite set of differential equations. 
  In our notation, $\XTrue(t)$ will
  represent some collection of moments of the dynamic system while $\UTrue(t)$ will
  be higher-order moments that are needed to compute $\XTrue(t)$.
  Moment closure methods prescribe rules to approximate the higher
  order moments $\UTrue(t)$, leading to an approximation of the
  moments in $\XTrue(t)$ through \eqref{eq:linearDynamics}. 

  The work in this paper differs from work on moment closure in
  two ways. The first that moment closure results focus on the
  analysis of uncontrolled stochastic processes, whereas our results
  apply to both uncontrolled and controlled stochastic processes. The
  second difference is that where moment problems provide approximate
  values of a given moment, our method can provide upper and lower
  bounds on the moment. We will see in examples that these bounds can
  be quite tight. For discussion on a related method from
  \cite{KuntzStatMoments16}, which also provides upper and lower
  bounds, see Remark~\ref{rem:specialCase}. 
\end{remark}

\begin{lemma}
  \label{lem:Cost}
  {\it
    There exist constant matrices, $C$, $D$, $H$, and $K$ 
    such that
    \begin{equation}
      \label{eq:linearCost}
      \left\langle
        \int_0^T c(\x(t),\u(t)) dt + h(\x(T),\u(T)) 
      \right \rangle = \int_0^T \left(C\XTrue(t) + D\UTrue(t)\right)
      dt 
      + H \XTrue(T) + K \UTrue(T).
    \end{equation}
  }
\end{lemma}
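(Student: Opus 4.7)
The plan is to exploit the fact that $c$ and $h$ are polynomials, so when expectations are taken they decompose as linear combinations of monomial moments, each of which is, by construction, either an entry of $\XTrue(t)$ or an entry of $\UTrue(t)$.

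First I would expand the instantaneous cost as
$$c(x,u) = \sum_{\alpha} c_\alpha\, x^{(q_\alpha)} u^{(r_\alpha)},$$
a finite sum with real coefficients $c_\alpha$ and multi-indices $(q_\alpha,r_\alpha)$. Taking expectations and using linearity of $\langle \cdot \rangle$ yields
$$\langle c(\x(t),\u(t))\rangle = \sum_\alpha c_\alpha \left\langle \x(t)^{(q_\alpha)} \u(t)^{(r_\alpha)} \right\rangle.$$
Each monomial moment on the right is, by the construction of the auxiliary state and input, an entry of $\XTrue(t)$ (when $r_\alpha = 0$ and $q_\alpha$ is one of the indices tracked in \eqref{eq:bigState}, including the constant term $1 = \langle \x(t)^{(0)}\rangle$) or otherwise an entry of $\UTrue(t)$. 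Collecting the $c_\alpha$ into the columns corresponding to the appropriate components of $\XTrue(t)$ and $\UTrue(t)$ produces constant row vectors $C$ and $D$ with $\langle c(\x(t),\u(t))\rangle = C\XTrue(t) + D\UTrue(t)$.

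Next I would repeat the same expansion for the terminal polynomial $h$, producing constant row vectors $H$ and $K$ such that $\langle h(\x(T),\u(T))\rangle = H\XTrue(T) + K\UTrue(T)$. Finally, Fubini/Tonelli (justified by polynomial growth of $c$ together with the finiteness of the moments appearing in $\XTrue$ and $\UTrue$ over $[0,T]$) lets expectation commute with the time integral, giving
$$\left\langle \int_0^T c(\x(t),\u(t))\, dt \right\rangle = \int_0^T \bigl(C\XTrue(t) + D\UTrue(t)\bigr)\, dt,$$
and adding the terminal term yields the claimed identity \eqref{eq:linearCost}.

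The only real obstacle is bookkeeping rather than mathematical depth: one must ensure that $\XTrue(t)$ and $\UTrue(t)$ together contain every monomial moment $\langle \x(t)^{(q_\alpha)} \u(t)^{(r_\alpha)}\rangle$ appearing in the expansions of $c$ and $h$. This is precisely the freedom reserved in the statement preceding Lemma~\ref{lem:dynamics}, where the composition of $\UTrue(t)$ is chosen so that Lemmas~\ref{lem:dynamics}--\ref{lem:Cost} all hold simultaneously. For the present lemma, one simply augments $\UTrue(t)$ with any monomials appearing in $c$ or $h$ that are not already accounted for in $\XTrue(t)$; since both $c$ and $h$ are polynomials this augmentation involves only finitely many additional moments.
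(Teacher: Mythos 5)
Your proposal is correct and follows essentially the same route as the paper's proof, which simply invokes the polynomial assumption on $c$ and $h$ together with the requirement that every moment they generate appears in $\XTrue(t)$ or $\UTrue(t)$; you have merely spelled out the monomial expansion and the (implicit) exchange of expectation and integral. No gap to report.
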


\begin{IEEEproof}
This follows by the assumption that $c$ and $h$ are polynomials,
provided that all moments from $c$ and $h$ that do not appear in
$\XTrue(t)$ are contained in $\UTrue(t)$
\end{IEEEproof}

\begin{lemma}
\label{lem:LMI}
{\it
Let $v_1(\x(t),\u(t)),\ldots,v_m(\x(t),\u(t))$ be any collection of
polynomials. 
There is an affine matrix-valued function
$M$ such that the following holds:
\begin{equation}
  \label{eq:basicLMI}
\left\langle \begin{bmatrix}
  v_1(\x(t),\u(t)) \\
  \vdots \\
  v_m(\x(t),\u(t))
\end{bmatrix}
\begin{bmatrix}
  v_1(\x(t),\u(t)) \\
  \vdots \\
  v_m(\x(t),\u(t))
\end{bmatrix}^\tp
\right\rangle = M(\XTrue(t),\UTrue(t)) \succeq 0.
\end{equation}
Furthermore, if $b_i(\x(t),\u(t)) \ge 0$ for all $t$, and
$s_1(\x(t),\u(t)),\ldots,s_{m_i}(\x(t),\u(t))$ is a collection of
polynomials, then there is a
different affine matrix-valued function $M_{b_i}$ such that
\begin{equation}
  \label{eq:scaledLMI}
\left\langle b_i(\x(t),\u(t))\begin{bmatrix}
  s_1(\x(t),\u(t)) \\
  \vdots \\
  s_{m_i}(\x(t),\u(t))
\end{bmatrix}
\begin{bmatrix}
  s_1(\x(t),\u(t)) \\
  \vdots \\
  s_{m_i}(\x(t),\u(t))
\end{bmatrix}^\tp
\right\rangle = M_{b_i}(\XTrue(t),\UTrue(t)) \succeq 0.
\end{equation} 
}
\end{lemma}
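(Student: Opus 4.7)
The plan is to reduce both claims to two ingredients: (i) every entry of the matrix on the left-hand side is a polynomial in $(\x(t),\u(t))$, so its expectation is a linear combination of monomial moments, and (ii) for each realization, the matrix is pointwise positive semidefinite, a property which is preserved under expectation. This mirrors the bookkeeping arguments used in the proofs of Lemma~\ref{lem:dynamics} and Lemma~\ref{lem:Cost}, combined with a convex-cone observation for the semidefinite conclusion.

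For the first statement, I would begin by expanding the outer product: the $(j,k)$ entry of $v(\x,\u) v(\x,\u)^{\tp}$ equals $v_j(\x,\u) v_k(\x,\u)$, which is a polynomial since products of polynomials are polynomials. Taking expectations, that entry becomes a finite linear combination of monomial moments $\langle \x(t)^{(q)} \u(t)^{(r)} \rangle$. As in the previous lemmas, we assume (augmenting $\UTrue(t)$ if necessary) that every such monomial moment appearing in any entry is either the constant $1$, a component of $\XTrue(t)$, or a component of $\UTrue(t)$; then the entry is an affine function of $(\XTrue(t),\UTrue(t))$. Collecting the affine expressions entry-by-entry defines the affine matrix-valued map $M$ satisfying the identity in \eqref{eq:basicLMI}.

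For positive semidefiniteness, fix an outcome $\omega$ in the probability space. Then $v(\x(t,\omega),\u(t,\omega)) v(\x(t,\omega),\u(t,\omega))^{\tp}$ is a rank-one symmetric matrix of the form $ww^{\tp}$ and hence satisfies $ww^{\tp} \succeq 0$. Since the PSD cone is closed and convex, it is preserved under expectation (expectation is a limit of convex combinations), so $\langle v v^{\tp}\rangle = M(\XTrue(t),\UTrue(t)) \succeq 0$, which gives the second half of \eqref{eq:basicLMI}.

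The scaled statement follows from exactly the same two ingredients, with one extra pointwise observation. The entries of $b_i(\x,\u) s(\x,\u) s(\x,\u)^{\tp}$ are still products of polynomials, hence polynomials, so the expectation is again an affine function $M_{b_i}(\XTrue(t),\UTrue(t))$ of the moment vectors under the same inclusion assumption on $\UTrue(t)$. For the sign, the hypothesis $b_i(\x(t),\u(t)) \geq 0$ almost surely means that pointwise we have a nonnegative scalar times a rank-one PSD matrix $ss^{\tp}$, so $b_i ss^{\tp} \succeq 0$ pointwise, and convexity of the PSD cone again yields $M_{b_i}(\XTrue(t),\UTrue(t)) \succeq 0$. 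I do not expect a genuine obstacle here; the only thing to be careful about is the implicit moment-inclusion assumption, which must be honored by the construction of $\XTrue(t)$ and $\UTrue(t)$ exactly as in Lemmas~\ref{lem:dynamics} and~\ref{lem:Cost}.
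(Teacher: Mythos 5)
Your proposal is correct and follows essentially the same route as the paper's proof: affineness of $M$ and $M_{b_i}$ from the polynomial/moment-inclusion assumption, pointwise positive semidefiniteness of the (scaled) outer product, and preservation of the PSD cone under expectation by convexity. You merely spell out the entrywise bookkeeping more explicitly than the paper does.
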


\begin{IEEEproof}
The existence of an affine $M$ 
follows again by the polynomial assumption, 
provided that all moments that do not appear in
$\XTrue(t)$ are contained in $\UTrue(t)$. The fact that all of the
matrices are positive semidefinite follows because the outer product
of a vector with itself 
must be positive semidefinite. Furthermore, the mean of this outer
product is positive semidefinite by convexity of the cone of semidefinite matrices. Similarly, if the positive semidefinite outer product is
multiplied by a non-negative scalar, then the corresponding mean is still positive
semidefinite. 
\end{IEEEproof}

\begin{remark}
  The LMI from \eqref{eq:basicLMI} must hold for any stochastic
  process for which all the corresponding moments are finite. However,
  there could 
  potentially be values $X$ and $U$ such that $M(X,U)\succeq 0$, but
  no random variables $\x$, $\u$ satisfy $\langle v(\x,\u)
  v(\x,\u)^\top \rangle  = M(X,U)$. Here $v$ corresponds to the vector
  of polynomials
  on the left of \eqref{eq:basicLMI}. See~\cite{lasserre2001global}
  and references therein. 
\end{remark}

In \eqref{eq:scaledLMI} we represented polynomial inequality
constraints by an LMI. An alternative method to represent inequality
constraints is given by the following lemma.

\begin{lemma}
  \label{lem:vec}
  {\it
    Let $b_i(\x(t),\u(t))\ge 0$ be a  polynomial
    inequalities and let $k$ be an odd positive number. There exist constant matrices $J_{b_i}$ and
    $L_{b_i}$ such that
    \begin{equation}
      \label{eq:vectorIneq}
      \left\langle
        \begin{bmatrix}
        b_i(\x(t),\u(t)) \\
        b_i(\x(t),\u(t))^3 \\
        \vdots \\
        b_i(\x(t),\u(t))^k
        \end{bmatrix}
      \right\rangle = J_{b_i} \XTrue(t) + L_{b_i} \UTrue(t) \ge 0.
    \end{equation}
  }
\end{lemma}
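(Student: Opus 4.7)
The plan is to establish the two parts of the statement separately: first, that the left-hand side of \eqref{eq:vectorIneq} is affine in the moments collected in $\XTrue(t)$ and $\UTrue(t)$, yielding the constant matrices $J_{b_i}$ and $L_{b_i}$; second, that the resulting vector is entrywise non-negative. The approach parallels the constructions in Lemmas~\ref{lem:dynamics}--\ref{lem:LMI}, with the non-negativity claim crucially exploiting the assumption that $k$ is odd.

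For the affine representation, I would begin by noting that since $b_i$ is assumed polynomial, every odd power $b_i(\x(t),\u(t))^{2j+1}$ for $j=0,1,\ldots,(k-1)/2$ is again a polynomial in the components of $\x(t)$ and $\u(t)$. Expanding in the monomial basis and applying linearity of expectation gives
\begin{equation*}
\left\langle b_i(\x(t),\u(t))^{2j+1}\right\rangle = \sum_{(p,r)} \alpha_{j,p,r} \left\langle \x(t)^{(p)} \u(t)^{(r)}\right\rangle,
\end{equation*}
for a finite collection of multi-index pairs $(p,r)$ and constants $\alpha_{j,p,r}$ that depend only on the polynomial $b_i$. Invoking the same bookkeeping convention used throughout the preceding lemmas, namely that $\XTrue(t)$ and $\UTrue(t)$ together contain every monomial moment of $(\x(t),\u(t))$ appearing in any polynomial expression under consideration, each term on the right is a component of either $\XTrue(t)$ or $\UTrue(t)$. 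Collecting the coefficients row by row, one for each odd power $2j+1$, produces the constant matrices $J_{b_i}$ and $L_{b_i}$ of the required shape.

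For the non-negativity, the essential point is that an odd power preserves sign. Since $b_i(\x(t),\u(t))\ge 0$ almost surely by hypothesis \eqref{eq:controlledInequality}, every odd power $b_i(\x(t),\u(t))^{2j+1}$ is a non-negative random variable, and therefore its expectation is non-negative. Entry by entry this yields $J_{b_i}\XTrue(t) + L_{b_i}\UTrue(t) \ge 0$, which is exactly the inequality in \eqref{eq:vectorIneq}.

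The argument contains no genuine obstacle; the only item that requires care is the bookkeeping step of ensuring $\UTrue(t)$ is rich enough to cover all monomials of degree up to $k\cdot\deg(b_i)$ that do not already appear in $\XTrue(t)$. Because this convention was fixed once and for all in Lemmas~\ref{lem:dynamics}--\ref{lem:LMI}, no new difficulty arises. The one piece of content beyond those earlier lemmas is the elementary observation that odd powers of a non-negative random variable have non-negative expectation, and this is precisely what allows \eqref{eq:vectorIneq} to be interpreted as a legitimate linear constraint on $\XTrue(t)$ and $\UTrue(t)$ rather than an identity that is trivially satisfied.
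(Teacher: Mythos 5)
Your proposal is correct and follows essentially the same route as the paper's proof: the affine representation comes from expanding the polynomial powers in the monomial basis and assuming the needed moments appear in $\XTrue(t)$ or $\UTrue(t)$, and the non-negativity comes from the fact that powers of a non-negative random variable are non-negative, hence have non-negative expectation (the paper phrases this via convexity of the non-negative real line). Your closing remark that oddness matters only to avoid trivially satisfied constraints, not for the validity of the inequality, matches the note the paper places immediately after the lemma.
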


\begin{IEEEproof}
  If $b_i(\x(t),\u(t)) \ge 0$, then  $b_i(\x(t),\u(t))^m \ge 0$ for
  any positive integer $m$. Convexity of the non-negative real line
  implies that $\langle b_i(\x(t),\u(t))^m \rangle \ge 0$ as well. The
  result is now immediate as long as all moments included on the left of as
  \eqref{eq:vectorIneq} are in either $\XTrue(t)$ or $\UTrue(t)$.
\end{IEEEproof}

Note that only odd powers are used in \eqref{eq:vectorIneq} since even
powers of $b_i(\x(t),\u(t))$ are automatically non-negative.

\begin{remark}
  Using symbolic computation, the matrices from
  Lemmas~\ref{lem:dynamics} -- \ref{lem:vec} can be computed
  automatically. Specifically, given symbolic forms of
  the costs and dynamics from
  \eqref{eq:optControl}, the monomials from $\XTrue(t)$ in
  \eqref{eq:bigState}, and the polynomials used in the outer products
  of \eqref{eq:basicLMI} -- \eqref{eq:scaledLMI}, the matrices $A$,
  $B$, $C$, $D$, $H$, $K$, $M$, and $M_{b_i}$ from the lemmas can be
  computed efficiently. Similarly, given a symbolic vector of the
  polynomials on the left of \eqref{eq:vectorIneq}, the corresponding
  matrices $J_{b_i}$ and $L_{b_i}$ can be computed. 
  Furthermore, the monomials required for
  $\UTrue(t)$ can be identified.

  While the procedure described above automates the rote
  calculations, user insight is still required to identify which
  polynomials to include in $\XTrue(t)$ and
  the outer product constraints \eqref{eq:basicLMI} -- \eqref{eq:scaledLMI}. We
  will see that natural choices for these polynomials can typically be
  found by examining the associated generator,
  \eqref{eq:generator}. Future work will automate the procedure of
  choosing state and constraint polynomials. 
\end{remark}

\subsection{Examples}
\label{sec:auxExamples}

In this subsection we will discuss how the results of the previous subsection can
be applied to specific examples. We will particularly focus on natural
candidates for the state $\XTrue(t)$ and input $\UTrue(t)$ vectors.

\begin{example}[Stochastic Logistic Model -- Continued]
  \label{ex:logisticState}
Recall the stochastic logistic model from \eqref{eq:stochLog}. This
system is an autonomous pure jump process with two different jumps. So, the
generator, \eqref{eq:generator}, reduces to:
\begin{equation}
Lh(x) = (h(\phi_1(x)) - h(x)) \lambda_1(x) + 
(h(\phi_2(x)) - h(x)) \lambda_2(x),
\end{equation} 
where $\phi_1(x) = x+1$ and $\phi_2(x)=x-1$, and $\lambda_i$ were
defined in \eqref{eq:stochLogIntensities}. Thus, the
differential equations corresponding to the moments have the form:
  \begin{equation*}
    \frac{d}{dt}
    \langle \x(t)^k \rangle = 
    \langle
    ((\x(t)+1)^k-\x(t)^k)(a_1\x(t)-b_1\x(t)^2) + ((\x(t)-1)^k-\x(t)^k)
    (a_2 \x(t)+b_2\x(t)^2)
    \rangle.
  \end{equation*}

  Because of a cancellation in the terms $(x+1)^k-x^k$ and
  $(x-1)^k-x^k$, the degree of the right hand side is $k+1$. It
  follows that moment $k$ will depend on moment $k+1$, but no moments
  above $k+1$. From this analysis, we see that the moments of this
  system are not closed. In this case, the auxiliary control variable
  $\UTrue(t)$ will represent a higher order moment that is not part of
  the state, $\XTrue(t)$. 
  
  We further utilize the following constraint, which must be satisfied
  by all valid moments.
  \begin{equation}
    \label{eq:scalarAnalysisOuter}
    \left
    \langle
    \begin{bmatrix}
      1 \\ \x(t) \\ \x(t)^2 \\ \vdots \\
      \x(t)^d
    \end{bmatrix}
    \begin{bmatrix}
      1 \\ \x(t) \\ \x(t)^2 \\ \vdots  \\
      \x(t)^d
    \end{bmatrix}^\top
    \right
    \rangle
    =
    \left
    \langle
    \begin{bmatrix}
      1 & \x(t) & \x(t)^2 & \cdots & \x(t)^d \\
      \x(t) & \x(t)^2 & \x(t)^3 & \cdots & \x(t)^{d+1} \\
      \x(t)^2 & \x(t)^3 & \x(t)^4 & \cdots & \x(t)^{d+2} \\
      \vdots & \vdots & & \ddots & \vdots \\
      \x(t)^d & \x(t)^{d+1} & \x(t)^{d+2} & \cdots & \x(t)^{2d}
    \end{bmatrix}
    \right
    \rangle \succeq 0.
  \end{equation}
   We choose the state and input of the moment dynamics as
  \begin{equation}
    \XTrue(t) = 
\left\langle
  \begin{bmatrix}
      1 \\
      \x(t) \\
      \vdots \\
      \x(t)^{2d-1}
      \end{bmatrix}
      \right\rangle,
      \qquad
      \UTrue(t) = \langle \x(t)^{2d} \rangle.
  \end{equation}
  Since the dynamics of moment $k$ depend moments of order $k+1$ and
  below, it follows that $\UTrue(t)$ is only needed for the equations
  of the final state moment, $\langle \x(t)^{2d-1}\rangle$. 

  As discussed above, if $\x(0)$ is an integer between $0$ and
  $\Omega=\frac{a_1}{b_1}$, then $\x(t)$ is an integer in $[0,\Omega]$ for
  all time. To maintain this bounding constraint, we utilize two
  further constraints:
  \begin{equation*}
        \left
    \langle
    \x(t)
    \begin{bmatrix}
      1 \\ \x(t) \\ \x(t)^2 \\ \vdots \\
      \x(t)^{d-1}
    \end{bmatrix}
    \begin{bmatrix}
      1 \\ \x(t) \\ \x(t)^2 \\ \vdots  \\
      \x(t)^{d-1}
    \end{bmatrix}^\top
    \right
    \rangle \succeq 0,
    \quad
            \left
    \langle
    (\Omega-\x(t))
    \begin{bmatrix}
      1 \\ \x(t) \\ \x(t)^2 \\ \vdots \\
      \x(t)^{d-1}
    \end{bmatrix}
    \begin{bmatrix}
      1 \\ \x(t) \\ \x(t)^2 \\ \vdots  \\
      \x(t)^{d-1}
    \end{bmatrix}^\top
    \right
    \rangle \succeq 0.
  \end{equation*}
   These constraints can also be written in terms of our state
  $\XTrue(t)$ and input $\UTrue(t)$. 
\end{example}

\begin{example}[Fishery Management -- Continued]
  \label{ex:fisheryState}
  Recall the fishery management problem from
  Example~\ref{ex:fishery}. In this problem, the moment dynamics are
  given by:
  \begin{equation}
    \label{eq:fisheryMoments}
    \frac{d}{dt}\langle \x(t)^k \rangle=
    k\left\langle \x(t)^k  - \gamma \x(t)^{k+1} -
      \x(t)^{k-1} \u(t)\right\rangle  +
    \frac{1}{2} k(k-1) \langle \x(t)^k \rangle.
  \end{equation}
   From this equation, we see that moment $k$ depends on moment $k+1$,
  so the moments are not closed. Furthermore, moment $k$ depends on
  the correlation of $\x(t)^{k-1}$ with the input $\u(t)$. The moments
  of $\x(t)$ and $\u(t)$ must satisfy:
  \begin{equation}
    \label{eq:fisheryLMI}
    \left\langle
    \begin{bmatrix}
      1 \\
      \x(t) \\
      \vdots \\
      \x(t)^d \\
      \u(t)
    \end{bmatrix}
    \begin{bmatrix}
      1 \\
      \x(t) \\
      \vdots \\
      \x(t)^d \\
      \u(t)
    \end{bmatrix}^\top 
  \right\rangle
  =
  \left\langle
  \begin{bmatrix}
    1 & \x(t) & \cdots & \x(t)^d & \u(t) \\
    \x(t) & \x(t)^2 & \cdots & \x(t)^{d+1} & \x(t) \u(t) \\
    \vdots & \vdots & \ddots & \vdots & \vdots \\
    \x(t)^d & \x(t)^{d+1} & \cdots & \x(t)^{2d} & \x(t)^d \u(t) \\
    \u(t) & \x(t) \u(t) & \cdots & \x(t)^d \u(t) & \u(t)^2
  \end{bmatrix}
  \right\rangle 
    \succeq 0. 
  \end{equation}

  We represent the state and control for the auxiliary control system
  as:
  \begin{equation}
    \label{eq:fisheryVectors}
    \XTrue(t) = 
    \begin{bmatrix}
      1 \\
      \x(t) \\
      \vdots \\
      \x(t)^{k}
    \end{bmatrix},\quad
    \UTrue(t)  = \begin{bmatrix}
      \x(t)^{k+1} \\
      \vdots \\
      \x(t)^{2d} \\
      \u(t) \\
      \x(t) \u(t) \\
      \vdots \\
      \x(t)^d\u(t) \\
      \u(t)^2
      \end{bmatrix}.
    \end{equation}
    Analysis of \eqref{eq:fisheryMoments} and \eqref{eq:fisheryLMI}
    shows that the largest value of $k$ that can be used in
    \eqref{eq:fisheryVectors} is given by
    \begin{equation}
      k = \begin{cases}
        1 & \textrm{ if } d = 1 \\
        d+1 & \textrm{ if } d > 1.
      \end{cases}
    \end{equation}
\end{example}

\begin{example}[Jump Rate Control -- Continued]
  \label{ex:jumpAux}
  Recall the jump rate control problem from Example~\ref{ex:rateControl}. In this case, the moments have
dynamics given by:
\begin{equation}
  \frac{d}{dt} \langle \x(t)^k \rangle = \frac{1}{2}k(k-1) \langle
  \x(t)^{k-2}\rangle   - \langle \x(t)^k \u(t) \rangle
\end{equation}
for $k\ge 1$. Thus, the state moments do not depend on higher-order
moments of the state, but they do depend on correlations with the
input. 
For this problem, we take our augmented state and
input to be:
\begin{equation}
  \XTrue(t) =
  \left\langle
  \begin{bmatrix}
    1 \\ \x(t) \\ \vdots \\ \x(t)^{2d}
  \end{bmatrix}
\right\rangle,
\qquad
\UTrue(t) =
\left\langle
  \begin{bmatrix}
    \u(t) \\ \x(t)\u(t) \\ \vdots \\ \x(t)^{2d}\u(t)
  \end{bmatrix}  
\right\rangle.
\end{equation}
If $\XTrue(t)$ defines valid moments, it must satisfy
\eqref{eq:scalarAnalysisOuter}. In contrast with the
stochastic logistic model, which constrains the state, in this example
we constrain the input as $0\le \u(t) \le \Omega$. We enforce a
relaxation of this 
constraint using the following LMIs:
  \begin{equation*}
        \left
    \langle
    \u(t)
    \begin{bmatrix}
      1 \\ \x(t) \\ \x(t)^2 \\ \vdots \\
      \x(t)^{d}
    \end{bmatrix}
    \begin{bmatrix}
      1 \\ \x(t) \\ \x(t)^2 \\ \vdots  \\
      \x(t)^{d}
    \end{bmatrix}^\top
    \right
    \rangle \succeq 0,
    \quad
            \left
    \langle
    (\Omega-\u(t))
    \begin{bmatrix}
      1 \\ \x(t) \\ \x(t)^2 \\ \vdots \\
      \x(t)^{d}
    \end{bmatrix}
    \begin{bmatrix}
      1 \\ \x(t) \\ \x(t)^2 \\ \vdots  \\
      \x(t)^{d}
    \end{bmatrix}^\top
    \right
    \rangle \succeq 0.
  \end{equation*}
\end{example}

\section{Results}
\label{sec:results}

This section presents the main results of the paper. In
Subsection~\ref{sec:resBounding} we will show the finite horizon
problems posed in Subsections~\ref{sec:AnalysisProb} and
\ref{sec:ControlProb} can be bounded in a unified fashion by solving
an optimal control problem for the auxiliary linear system. For the
analysis problem from Subsection~\ref{sec:AnalysisProb}, the method
can be used to provide upper and lower bounds on the desired
quantities. For the stochastic control problem of
Subsection~\ref{sec:ControlProb}, the method provides lower bounds on
the achievable values of objective. Subsection~\ref{sec:ssBounds}
gives the analogous bounding results for the steady-state problem
introduced in Subsection~\ref{sec:ss}. Subsection~\ref{sec:controller}
provides a method for constructing feasible control laws for
stochastic control problems using the results of the auxiliary optimal control
problem. Finally, Subsection~\ref{sec:num} applies the results of this
section to the running examples.

\subsection{Bounds via Linear Optimal Control}
\label{sec:resBounding}

This section presents the main result of the paper. The result uses
Lemmas~\ref{lem:dynamics}, \ref{lem:Cost}, and \ref{lem:LMI}
to define an optimal control problem with positive
semidefinite constraints. This optimal control problem can be used to
provide the bounds on analysis and synthesis problems, as described in \eqref{eq:analysisBounds} and
\eqref{eq:synthesisBounds}, respectively. 

\begin{theorem}
  \label{thm:finiteHorizon}
{\it 
  Let $A$, $B$, $C$, $D$, $H$, $K$, $M$, and $M_{b_i}$ be 
  the terms defined in
  Lemmas~\ref{lem:dynamics}~--~\ref{lem:LMI}. Consider
  the corresponding continuous-time semidefinite program:
  \begin{subequations}
    \label{eq:SDP}
    \begin{align}
      \label{eq:SDPCost}
      & \underset{X(t),U(t)}{\textrm{minimize}} && \int_0^T \left(C X(t) + D U(t)\right)
      dt 
      + H X_T + K U_T \\
      & \textrm{subject to} && \dot X(t) = A X(t) + B U(t) \\
      &&& X(0) = \XTrue(0) \\
      &&& M(X(t),U(t)) \succeq 0 \quad \textrm{for all} \quad t\in
          [0,T] \\
      \label{eq:ineqLMI}
      &&& M_{b_i}(X(t),U(t)) \succeq 0 \quad \textrm{for all}
          \\ &&&\quad
      t\in [0,T], \; i\in \{1,\ldots,n_b\}.
    \end{align}
  \end{subequations}

  The optimal value for this problem is always a lower bound
  on the optimal value for the stochastic control problem,
  \eqref{eq:optControl}, provided that the corresponding moments exist. 
  If the number of constraints in the
  linear problem, \eqref{eq:SDP}, is increased, either by adding more moments to
  $\XTrue(t)$ or by adding more semidefinite constraints, the value of 
  \eqref{eq:SDP} cannot decrease.
}
\end{theorem}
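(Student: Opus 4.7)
The plan is to show that every admissible stochastic control strategy for \eqref{eq:optControl} lifts to a feasible trajectory of the SDP \eqref{eq:SDP} that achieves exactly the same objective value. Since the SDP minimizes over a (potentially strictly larger) set, its infimum is a lower bound on $V^*$. Monotonicity is then a standard ``enlarging the constraint set / shrinking the feasible set'' argument, handled separately for the two ways of augmenting the problem.

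First I would fix an arbitrary admissible pair $(\x(t),\u(t))$ solving \eqref{eq:controlledDynamics}--\eqref{eq:controlledInequality}, and assume, as stipulated in the statement, that all moments appearing in $\XTrue(t)$ and $\UTrue(t)$ are finite for $t\in[0,T]$. Define candidate SDP trajectories $X(t):=\XTrue(t)$ and $U(t):=\UTrue(t)$. Then Lemma~\ref{lem:dynamics} gives $\dot X(t)=AX(t)+BU(t)$ with $X(0)=\XTrue(0)$. Lemma~\ref{lem:LMI} supplies $M(X(t),U(t))\succeq 0$, and, because the path constraints $b_i(\x(t),\u(t))\ge 0$ hold almost surely, the same lemma yields $M_{b_i}(X(t),U(t))\succeq 0$. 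Hence $(X,U)$ is feasible for \eqref{eq:SDP}. Lemma~\ref{lem:Cost} then shows that the SDP objective evaluated at $(X,U)$ equals the stochastic objective $\left\langle\int_0^T c(\x,\u)\,dt + h(\x(T),\u(T))\right\rangle$. Taking the infimum over admissible strategies, the SDP infimum is no larger than $V^*$, which is the desired lower bound.

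For the monotonicity claim, I would treat the two enlargements separately. Adding a semidefinite constraint is trivial: it only shrinks the feasible set of \eqref{eq:SDP}, so the minimum cannot decrease. Adding a new moment to $\XTrue(t)$ is slightly more delicate because it enlarges the state vector and produces new rows of $A$ and $B$; however, every pair $(X',U')$ feasible for the augmented SDP restricts (by dropping the extra component, or relabeling it as a coordinate of $U$ in the smaller problem) to a pair feasible for the original SDP, and this restriction leaves the cost unchanged because the original cost matrices $C,D,H,K$ only touch moments present before the enlargement. Thus the augmented infimum is at least the original infimum.

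The main obstacle is the bookkeeping rather than any deep inequality: one must verify that the restriction in the monotonicity step respects the dynamics and LMI structure after re-partitioning coordinates between $\XTrue$ and $\UTrue$, and that the linear mappings produced by Lemmas~\ref{lem:dynamics}--\ref{lem:LMI} remain consistent under this re-partitioning. Conceptually, the key insight driving the whole argument is that the SDP is a \emph{relaxation}: every realizable moment trajectory is feasible, but feasibility does not require the existence of an underlying stochastic process, which is precisely why the optimal SDP value can only bound the true stochastic optimum from below.
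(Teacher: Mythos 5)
Your proposal is correct and follows essentially the same relaxation argument as the paper: lift the moment trajectories of the stochastic system into the SDP via Lemmas~\ref{lem:dynamics}--\ref{lem:LMI}, observe via Lemma~\ref{lem:Cost} that the cost is preserved, and then obtain monotonicity by a feasible-set-shrinking argument. The only minor difference is that the paper lifts just the optimal Markovian strategy (invoking its existence from classical stochastic control), whereas you lift every admissible strategy and take an infimum --- a slightly more robust phrasing that avoids assuming an optimizer is attained, at the cost of implicitly requiring finite moments along a minimizing sequence rather than only for the optimal policy.
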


\begin{IEEEproof}
  From classical results in stochastic control, 
  optimal strategies can be found of 
  the form $\u(t) = \gamma^*(t,\x(t))$, \cite{flemingcontrolled2006}.
  Thus, the function $\gamma^*$ induces a joint distribution over the
  trajectories $\x(t)$ and $\u(t)$. 
  Denote the corresponding moments by $\XTrue^*(t)$ and
  $\UTrue^*(t)$, provided that they exist. 
  Lemmas~\ref{lem:dynamics} and \ref{lem:LMI} imply that
  $\XTrue^*(t)$ and $\UTrue^*(t)$ satisfy all of
  the constraints of the semidefinite program. Furthermore,
  Lemma~\ref{lem:Cost} implies that the optimal cost of the original 
  stochastic control problem is given by \eqref{eq:linearCost} /
  \eqref{eq:SDPCost} applied to $\XTrue^*(t)$ and $\UTrue^*(t)$. 
  Thus the optimal cost of the semidefinite program is a lower bound
  on the cost of the original stochastic control problem. 

  Say now that the problem is augmented by adding more
  semidefinite constraints. In this case, the set of feasible
  solutions can only get smaller, and so the optimal value cannot decrease. 

  On the other hand, say that more moments are added to $\XTrue(t)$
  giving rise to larger state vectors, $X(t)$. In this case, the
  number of variables increases, but the original variables must still
  satisfy the constraints of the original problem, with added
  constraints imposed by the new moment equations. So, again the
  optimal value cannot decrease. 
\end{IEEEproof}

\begin{remark}
An analogous optimal control problem can be formulated in which the
LMI from \eqref{eq:ineqLMI} is replaced by linear constraints of  the
form:
\begin{equation}
  J_{b_i} X(t) + L_{b_i} U(t) \ge 0,
\end{equation}
where $J_{b_i}$ and $L_{b_i}$ are the matrices from
Lemma~\ref{lem:vec}.

\end{remark}

Now we describe how Theorem~\ref{thm:finiteHorizon} can be used to
provide upper and lower bounds on analysis problems. 

\begin{corollary}
  {\it 
    For the case of the uncontrolled system,
    \eqref{eq:uncontrolledDynamics}, the solution to the
    continuous-time semidefinite program gives a lower bound on the
    following mean:
    \begin{equation}
      \left\langle 
\int_0^T c(\x(t)) dt + h(\x(T))
\right\rangle.
    \end{equation}
    Furthermore, an upper bound on the mean can be found by maximizing the
    objective, \eqref{eq:SDPCost}. 
  }
\end{corollary}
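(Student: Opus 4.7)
The plan is to observe that this corollary follows almost immediately from Theorem~\ref{thm:finiteHorizon} by specialization, combined with a symmetry argument for the upper bound. First I would note that the uncontrolled dynamics \eqref{eq:uncontrolledDynamics} is the special case of \eqref{eq:controlledDynamics} in which all coefficients depending on $\u(t)$ vanish, so the associated auxiliary matrices $A$, $B$, $C$, $D$, $H$, $K$, $M$, $M_{b_i}$ from Lemmas~\ref{lem:dynamics}--\ref{lem:LMI} are well-defined (with $\UTrue(t)$ containing higher-order moments of $\x(t)$ alone). No additional work is needed to set up the SDP.

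For the lower bound, I would simply apply Theorem~\ref{thm:finiteHorizon}: since there is no control to optimize over, the ``optimal value'' of the uncontrolled stochastic problem is just the deterministic scalar $\langle \int_0^T c(\x(t))dt + h(\x(T))\rangle$. The theorem tells us that the minimum of the SDP \eqref{eq:SDP} is at most this value, giving the desired lower bound.

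For the upper bound, I would repeat the feasibility argument from the proof of Theorem~\ref{thm:finiteHorizon} in the opposite direction. Let $\XTrue^*(t)$ and $\UTrue^*(t)$ denote the true moment trajectories of \eqref{eq:uncontrolledDynamics} (assumed to exist). By Lemma~\ref{lem:dynamics} these satisfy the linear ODE, by Lemma~\ref{lem:LMI} they satisfy the semidefinite constraints, and by Lemma~\ref{lem:Cost} the SDP objective evaluated at $(\XTrue^*,\UTrue^*)$ equals the true mean $\langle \int_0^T c(\x(t))dt + h(\x(T))\rangle$. Consequently $(\XTrue^*,\UTrue^*)$ is a feasible point of the SDP in \eqref{eq:SDP}, and the true mean is sandwiched between the minimum and maximum of the SDP objective over the feasible set. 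Maximizing in place of minimizing therefore produces an upper bound.

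There is essentially no obstacle here beyond checking that all the moments invoked above are indeed finite, and that the replacement of $\min$ by $\max$ preserves all the constraints (it does, since the feasible set is defined by equalities and semidefinite inequalities that do not depend on the direction of optimization). The monotonicity part of Theorem~\ref{thm:finiteHorizon}, applied with the reversed sign, also yields the sequences $\Lower_i$ and $\Upper_i$ in \eqref{eq:analysisBounds}: adding moments to $\XTrue(t)$ or adding semidefinite constraints shrinks the feasible set, so lower bounds increase and upper bounds decrease.
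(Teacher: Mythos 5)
Your proposal is correct and follows essentially the same route the paper intends: the corollary is stated without a separate proof precisely because it is the specialization of Theorem~\ref{thm:finiteHorizon} to the case where $\u(t)$ is absent, with the upper bound obtained by re-running the feasibility argument from that theorem's proof (the true moment trajectories $\XTrue^*(t)$, $\UTrue^*(t)$ are feasible and attain the true mean as objective value, so the maximum over the feasible set dominates it). Your added remarks on finiteness of moments and on monotonicity of the bound sequences are consistent with the paper's treatment.
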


Note that since the objective, \eqref{eq:SDPCost}, is linear, both the
maximization and minimization problems are convex. We will see in
examples that as we increase the size of the SDP, the problem becomes
more constrained and the bounds can be quite tight.

\begin{example}[LQR -- Continued]
  \label{ex:lqrSol}
  We will now show how the linear quadratic regulator problem, defined
  in \eqref{eq:lqr} can be cast into the SDP framework 
  from Theorem~\ref{thm:finiteHorizon}. Denote the joint second moment
  of the state and control by:
  \begin{equation}
    \left\langle
      \begin{bmatrix}
        \x(t)\x(t)^\top & \x(t) \u(t)^\top \\
        \u(t)\x(t)^\top & \u(t) \u(t)^\top
      \end{bmatrix}
    \right\rangle
      =
      \begin{bmatrix}
        P_{xx}(t) & P_{xu}(t) \\
        P_{ux}(t) & P_{uu}(t)
      \end{bmatrix}.
    \end{equation}
    In this case the augmented states and inputs can be written as:
    \begin{equation}
      \XTrue(t) = \begin{bmatrix}
        1 \\ P_{xx}(t)^s
      \end{bmatrix},\qquad
      \UTrue(t) = \begin{bmatrix} P_{xu}(t)^s
        \\ P_{uu}(t)^s
        \end{bmatrix}.
      \end{equation}
      Here $M^s$ denotes the vector formed by stacking the columns of
      $M$. Then the SDP from \eqref{eq:SDP} is equivalent to the
      following specialized SDP:
      \begin{subequations}
        \label{eq:lqrSDP}
      \begin{align}
        & \underset{X(t),U(t)}{\textrm{minimize}}
        && \int_0^T \left(\Tr(QP_{xx}(t)) +\Tr(RP_{uu}(t))\right) dt
           + \Tr(\Psi P_{xx}(T)) \\
        & \textrm{subject to}
        && \dot P_{xx}(t) =
           \Alqr P_{xx}(t) + P_{xx}(t) \Alqr^\top +
           \Blqr P_{ux}(t) + P_{xu}(t) \Blqr^\top + \Glqr
           \Glqr^\top \\
        &&& P_{xx}(0) = \Sigma \\
        &&&       \begin{bmatrix}
        P_{xx}(t) & P_{xu}(t) \\
        P_{ux}(t) & P_{uu}(t)
      \end{bmatrix} \succeq 0.
      \end{align}
    \end{subequations}
       A straightforward calculation from the Pontryagin Minimum
    Principle shows that the optimal solution is given by
    \begin{subequations}
    \begin{align}
      P_{ux}(t) &=K(t) P_{xx}(t)  \\
      P_{xx}(t) &= K(t)P_{xx}(t) K(t)^{\tp},
    \end{align}
    \end{subequations}
    where $K(t)$ is the associated LQR gain. 
\end{example}

\subsection{Steady State Bounds}
\label{sec:ssBounds}

If the process has converged to a stationary distribution, then the
moments must be constant. This implies that the true state satisfies
$\frac{d}{dt} \XTrue(t) = 0$. 
\begin{theorem}
{\it
    Let $A$, $B$, $H$, $K$, $M$, and $M_{b_i}$ be 
  the terms defined in
  Lemmas~\ref{lem:dynamics}~--~\ref{lem:LMI}. Consider the following 
  semidefinite program:
    \begin{subequations}
      \label{eq:ssSDP}
    \begin{align}
      & \underset{X,U}{\textrm{minimize}} && 
      H X + K U \\
      & \textrm{subject to} && 0 = A X + B U \\
      &&& M(X,U) \succeq 0  \\
      &&& M_{b_i}(X,U) \succeq 0 \quad \textrm{for all } i\in \{1,\ldots,n_b\}.
    \end{align}
  \end{subequations}

  The optimal value for this problem is always a lower bound on the
  optimal value for the steady-state stochastic control problem,
  \eqref{eq:ssControl}, provided that the stationary moments exist and
  are finite. If the number of constraints in the
  linear problem, \eqref{eq:ssSDP}, is increased, either by adding
  more moments to 
  $\XTrue(t)$ or by adding more semidefinite constraints, the value of 
  \eqref{eq:ssSDP} cannot decrease.
}
\end{theorem}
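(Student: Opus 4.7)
The plan is to mirror the proof of Theorem~\ref{thm:finiteHorizon}, but in the stationary regime. The idea is that, under the hypothesis that the stationary moments exist and are finite, any admissible control policy that achieves the optimal steady-state value induces a joint stationary distribution on $(\x(t),\u(t))$, and the associated moment vectors $\XTrue^*$ and $\UTrue^*$ provide a feasible point of \eqref{eq:ssSDP} whose objective matches the optimal value of \eqref{eq:ssControl}. This immediately implies that the SDP value is a lower bound.

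More concretely, I would proceed in the following steps. First, invoke a standard result in stochastic control (as in the finite-horizon case, citing \cite{flemingcontrolled2006}) to assert that for the average-cost problem one may restrict attention to stationary feedback policies $\u(t)=\gamma^*(\x(t))$, which drive the joint process $(\x(t),\u(t))$ to a stationary distribution $\pi^*$ whenever the problem admits a finite optimal value with finite stationary moments. Let $\XTrue^*$ and $\UTrue^*$ denote the constant moment vectors computed under $\pi^*$. Second, apply Lemma~\ref{lem:dynamics}: because the moments are constant in the stationary regime, $\dot\XTrue^*(t)\equiv 0$, so $0=A\XTrue^*+B\UTrue^*$, giving the equality constraint. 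Third, apply Lemma~\ref{lem:LMI} pointwise at stationarity to obtain $M(\XTrue^*,\UTrue^*)\succeq 0$ and $M_{b_i}(\XTrue^*,\UTrue^*)\succeq 0$ for every inequality $b_i(\x,\u)\ge 0$. Fourth, use Lemma~\ref{lem:Cost} applied to $h$ alone to conclude that $\lim_{T\to\infty}\langle h(\x(T),\u(T))\rangle = H\XTrue^*+K\UTrue^*$, since the integrand and terminal penalty machinery specialize to the purely algebraic terminal mapping in steady state. Combining these four observations shows $(\XTrue^*,\UTrue^*)$ is feasible for \eqref{eq:ssSDP} with objective equal to the true optimal value of \eqref{eq:ssControl}, so the SDP optimum is at most this value.

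For the monotonicity claim, I would argue exactly as in Theorem~\ref{thm:finiteHorizon}. Adding more semidefinite constraints can only shrink the feasible set, hence cannot decrease the minimum. Adding more moments to $\XTrue$ enlarges the decision vector, but the enlarged problem retains all previous constraints (new moment equations and LMIs are appended, while the original ones remain, with the original coordinates of $X$ unchanged); thus every feasible point of the enlarged problem projects to a feasible point of the smaller one with the same objective value, and the minimum cannot decrease.

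The main subtlety, which I would treat with care rather than sweep under the rug, is the passage from the finite-horizon objective machinery of Lemmas~\ref{lem:Cost} and \ref{lem:LMI} to the limit $T\to\infty$ in \eqref{eq:ssControl}. In particular one must ensure that $H\XTrue^*(T)+K\UTrue^*(T)$ actually converges to $H\XTrue^*+K\UTrue^*$ under $\pi^*$; this is where the stated hypothesis that the stationary moments exist and are finite does its work, since convergence of the relevant marginal moments to those of $\pi^*$ yields convergence of any fixed linear functional $H\,\cdot\,+K\,\cdot\,$ evaluated on them. The rest of the argument is then a direct transcription of the finite-horizon proof with the differential equation constraint replaced by its algebraic stationary counterpart.
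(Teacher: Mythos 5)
Your proposal is correct and follows exactly the route the paper intends: the paper states this theorem without an explicit proof, relying on the remark that stationarity forces $\frac{d}{dt}\XTrue(t)=0$ together with a direct transcription of the proof of Theorem~\ref{thm:finiteHorizon}, which is precisely what you carry out. Your added care about the limit $T\to\infty$ and the role of the finite-stationary-moments hypothesis is a reasonable filling-in of a detail the paper leaves implicit.
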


\begin{remark}
  \label{rem:specialCase}
A special case of this theorem is studied in detail in
\cite{KuntzStatMoments16}. Specifically, that work considers the case
in which the stochastic process is given by an uncontrolled stochastic differential
equation with no jumps. They prove that in some special cases that the
upper and lower bounds converge.
Our result is more general, in that it can be
applied to processes with jumps and with control inputs. Furthermore,
we consider both finite horizon problems
and the stationary case. 
\end{remark}

\subsection{Constructing a Feasible Controller}
\label{sec:controller}

The result of the SDPs \eqref{eq:SDP} and \eqref{eq:ssSDP} can be used
to give lower bounds on the true value of optimal control
problems. However, they do not necessarily provide a means for
computing feedback controllers which achieve the bounds. Indeed, aside
from cases with closed moments, the lower bounds cannot be achieved by
feedback. This subsection gives a heuristic method for computing lower
bounds.

The idea behind the controller is to fix a parametric form of the
controller, and then optimize the parameters so that the moments
induced by the controller match the optimal moments as closely as
possible. Assume that $\u(t)$ is a polynomial function of $\x(t)$:
\begin{equation}
  \label{eq:expansion}
\u(t) = \sum_{i=1}^p k_i(t) \x(t)^{(d_i)},
\end{equation}
where $k_i(t)$ is a vector of coefficients and $\x(t)^{(d_i)}$ is a monomial. In this case, the
correlation between $\u(t)$ and any other monomial $\x(t)^{(m)}$ can be
expressed as:

\begin{equation}
  \label{eq:momentCoef}
  \langle \u(t) \x(t)^{(m)} \rangle = \sum_{i=1}^p k_i(t) \langle
  \x(t)^{(d_i+m)} \rangle.
\end{equation}

Our approach for computing values of $k_i(t)$ is motivated by the
following observations for the linear quadratic regulator. 

\begin{example}[LQR -- Continued]
  Recall the regulator problem, as discussed in Examples~\ref{ex:lqr}
  and \ref{ex:lqrSol}. In this case, it is well-known that the true
  optimal solution takes the form $\u(t) = K(t) \x(t)$. This is a
  special case of \eqref{eq:expansion} in which only linear monomials
  are used. In this case, \eqref{eq:momentCoef} can be expressed as:
  \begin{equation}
    P_{ux}(t) = \langle \u(t) \x(t)^\tp \rangle = K(t) \langle \x(t)
    \x(t)^\tp \rangle = K(t) P_{xx}(t). 
  \end{equation}
  Provided that $P_{xx}(t) \succ 0$, the LQR gain can be computed from
  the solution of the SDP \eqref{eq:lqrSDP} by setting $K(t) =
  P_{ux}(t) P_{xx}(t)^{-1}$. 
\end{example}

The regular problem is simple because the moments are closed and the
optimal input is a linear gain. Typically, however, the moments of the
state are not closed and there is no guarantee that the optimal
solution has the polynomial form described in
\eqref{eq:expansion}. For the regulator problem, we saw that the true
gain could be computed by first solving the associated SDP and then
solving a system of linear equations.

We generalize this idea as
follows.
\begin{enumerate}
\item Solve the SDP from \eqref{eq:SDP}.
\item Solve the least squares problem:
  \begin{equation}
    \label{eq:leastSquares}
    \underset{k_i(t)}{\textrm{minimize}}
    \left\|
      \begin{bmatrix}
        \langle \u(t) \x(t)^{(m_1)} \rangle & \cdots
        &
        \langle \u(t) \x(t)^{(m_q)} \rangle 
      \end{bmatrix}
      - \sum_{i=1}^p k_i(t)
      \begin{bmatrix}
        \langle \x(t)^{(d_i+m_1)}\rangle & \cdots &
        \langle \x(t)^{(d_i+m_q)} \rangle
      \end{bmatrix}
    \right\|_{F}^2,
  \end{equation}
  where $\|\cdot\|_F$ denotes the Frobenius norm. 
\end{enumerate}
As long as all of the moments $\langle \u(t) \x(t)^{m_j} \rangle$ and
$\langle \x(t)^{(d_i+m_j)} \rangle$ are contained in either
$\XTrue(t)$ or $\UTrue(t)$, their approximate values will be found in
the SDP \eqref{eq:SDP}. With fixed values from the SDP, the least
squares problem can be solved efficiently for coefficients
$k_i(t)$.

If the SDP gave the true values of the optimal moments, and the true
optimal controller really had polynomial form described in
\eqref{eq:expansion}, then the procedure above would give the exact
optimal controller. When the moments are not exact or the true
controller is not of the form in  \eqref{eq:expansion}, then the
procedure above gives coefficients that minimize the error in the
moment relationships from \eqref{eq:momentCoef}. Intuitively, this
gives coefficients that enforce the optimal correlations between the
controller and state as closely as possible.

\begin{proposition}
  \label{prop:controller}
  {\it
    Let $\u(t)$ be the controller computed according to
    \eqref{eq:expansion} and \eqref{eq:leastSquares}.
    The expected value of the cost \eqref{eq:cost} induced by this
    controller is an upper bound on the optimal value for the original
    optimal control problem
    \eqref{eq:optControl}.
  }
\end{proposition}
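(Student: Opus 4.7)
The plan is to observe that Proposition~\ref{prop:controller} is essentially a tautology once admissibility is verified. The optimal value $V^*$ of \eqref{eq:optControl} is defined as an infimum of the expected cost over all admissible controllers, so any admissible controller's induced cost is automatically an upper bound on $V^*$. Therefore the entire task reduces to confirming that the constructed controller is admissible in the sense defined after \eqref{eq:optControl}, namely measurable with respect to the filtration $\mathcal{F}(t)$ generated by $\{\x(\tau):0\le \tau \le t\}$, and to checking that the resulting closed-loop system is well-posed so that the expected cost is a well-defined extended real number.

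First I would recall the construction: the coefficients $k_i(t)$ are deterministic functions of $t$ obtained by solving the semidefinite program \eqref{eq:SDP} followed by the least-squares problem \eqref{eq:leastSquares}, and the controller itself is given by the polynomial $\u(t) = \sum_{i=1}^p k_i(t)\x(t)^{(d_i)}$. Since each $k_i(t)$ depends only on $t$ (it is determined offline from the SDP/least-squares data) and since $\x(t)$ is $\mathcal{F}(t)$-measurable, the random variable $\u(t)$ is a Borel-measurable function of $(t,\x(t))$ and is therefore $\mathcal{F}(t)$-measurable. Thus $\u(t)$ is an admissible controller in the sense of \eqref{eq:optControl}.

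Next I would invoke the definition of $V^*$: for any admissible $\u(\cdot)$ satisfying the dynamics and inequality constraints, the expected cost $\langle \int_0^T c(\x(t),\u(t))\,dt + h(\x(T),\u(T))\rangle$ is by definition at least $V^*$. Applying this to the specific admissible controller constructed above yields the claimed upper bound. One should also briefly note that if any state constraint $b_i(\x(t),\u(t))\ge 0$ from \eqref{eq:controlledInequality} is present, then the constructed polynomial feedback may fail to satisfy it almost surely; in that case the induced cost should be interpreted as $+\infty$, which is still an upper bound (vacuously), and the useful content of the proposition is that \emph{when} the controller is feasible, its cost is a valid upper bound that can be compared to the lower bound from Theorem~\ref{thm:finiteHorizon} to estimate the optimality gap.

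The only genuine subtlety, and the step I would flag, is ensuring that the closed-loop stochastic differential/jump equation obtained by substituting $\u(t) = \sum_{i=1}^p k_i(t)\x(t)^{(d_i)}$ into \eqref{eq:controlledDynamics} admits a solution whose moments exist up to the order appearing in the cost; without this, the expected cost is not a finite real number, and the upper-bound interpretation becomes vacuous. This is not a gap in the logic of the proposition (an infinite cost trivially upper-bounds $V^*$), but it is worth remarking on because it is the practical obstacle to using the bound. Existence of a strong solution and finiteness of the relevant moments follows from standard results on polynomial SDEs with jumps when either the closed-loop drift is globally Lipschitz on the state-space of interest or the state is confined to a bounded invariant set, as is the case in the running examples.
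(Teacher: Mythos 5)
Your proposal is correct and takes essentially the same route as the paper's own proof, which simply observes that the constructed controller is feasible and that any feasible solution of a minimization problem upper-bounds the optimal value. Your extra remarks on admissibility, moment existence, and the possibility that the polynomial feedback violates the constraints $b_i(\x(t),\u(t))\ge 0$ are sound elaborations rather than a different argument --- and the last point is genuinely pertinent, since the paper itself must clip the controller in the jump-rate example to restore feasibility.
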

\begin{proof}
  The controller produced by \eqref{eq:expansion} and
  \eqref{eq:leastSquares} is feasible. Since \eqref{eq:optControl} is
  a minimization problem, any feasible solution gives an upper bound
  on the optimal value. 
\end{proof}

Combining Theorem~\ref{thm:finiteHorizon} and
Proposition~\ref{prop:controller} gives the following corollary. 

\begin{corollary}
  {\it
  Let $\Lower$ be the optimal value of \eqref{eq:SDP} and let $\Upper$
  be the expected cost induced by the controller computed from
  \eqref{eq:expansion} and \eqref{eq:leastSquares}. If $V^*$ is the
  optimal value of the original optimal control problem,
  \eqref{eq:optControl}, then $V^*$ satisfies:
  \begin{equation}
    \Lower \le V^* \le \Upper.
  \end{equation}
  }
\end{corollary}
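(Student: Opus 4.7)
The plan is to prove this corollary as an immediate consequence of combining Theorem~\ref{thm:finiteHorizon} and Proposition~\ref{prop:controller}, both of which have already been established in the paper. Since the statement consists of two inequalities, $\Lower \le V^*$ and $V^* \le \Upper$, I would address each one separately by invoking the corresponding prior result and then concatenate them.

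For the lower bound $\Lower \le V^*$, I would directly apply Theorem~\ref{thm:finiteHorizon}, which asserts that the optimal value of the SDP \eqref{eq:SDP} is a lower bound on the optimal value of the stochastic control problem \eqref{eq:optControl} whenever the moments involved exist. Since $\Lower$ is defined as precisely this SDP optimal value, the inequality is immediate. For the upper bound $V^* \le \Upper$, I would invoke Proposition~\ref{prop:controller}, which shows that the controller synthesized via \eqref{eq:expansion} and \eqref{eq:leastSquares} is feasible for \eqref{eq:optControl} and therefore its induced expected cost upper-bounds the optimal value of the minimization problem. Since $\Upper$ is defined as this induced expected cost, the inequality follows.

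There is essentially no obstacle in this proof; it amounts to transcription of the two prior results and juxtaposition of their conclusions. The only fine point worth flagging explicitly is the tacit hypothesis that the relevant moments exist and are finite, so that both Theorem~\ref{thm:finiteHorizon} applies (yielding the lower bound) and the least-squares step \eqref{eq:leastSquares} is well posed (so that the heuristic controller is defined and Proposition~\ref{prop:controller} applies). These assumptions are already inherited from the hypotheses of the two results being combined, so no additional argument is needed. The corollary's value is conceptual rather than technical: it makes explicit the sandwiching of $V^*$ and shows that the gap $\Upper - \Lower$ provides a certifiable bound on the suboptimality of the constructed controller.
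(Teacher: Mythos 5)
Your proof is correct and matches the paper exactly: the paper presents this corollary as an immediate combination of Theorem~\ref{thm:finiteHorizon} (giving $\Lower \le V^*$) and Proposition~\ref{prop:controller} (giving $V^* \le \Upper$), with no further argument. Your added remark about the tacit existence-of-moments hypothesis is a reasonable, harmless clarification.
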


\begin{remark}
In Theorem~\ref{thm:finiteHorizon}, we saw that increasing the size of
the SDP leads to a monotonic sequence of lower bounds $\Lower_0 \le
\Lower_1 \cdots \le V^*$. Each of the corresponding SDPs has an
associated upper bound $\Upper_i$, computed via the least squares
method above. We have no formal guarantee that $\Upper_i$ decreases,
but in numerical examples below, we see that it often does. 
\end{remark}

\subsection{Numerical Examples}
\label{sec:num}

This subsection applies the methodology from the paper to the running
examples. The LQR example is omitted, since the results are well
known and our method provably recovers the standard LQR solution.
In the problems approximated via Theorem~\ref{thm:finiteHorizon}, the
continuous-time SDP was discretized using Euler integration. In
principle, higher accuracy could be obtained via a more sophisticated
discretization scheme \cite{raosurvey2009}. 

\begin{example}[Stochastic Logistic Model -- Continued]
  \begin{figure}
    \centering 
    \includegraphics[width=.5\textwidth]{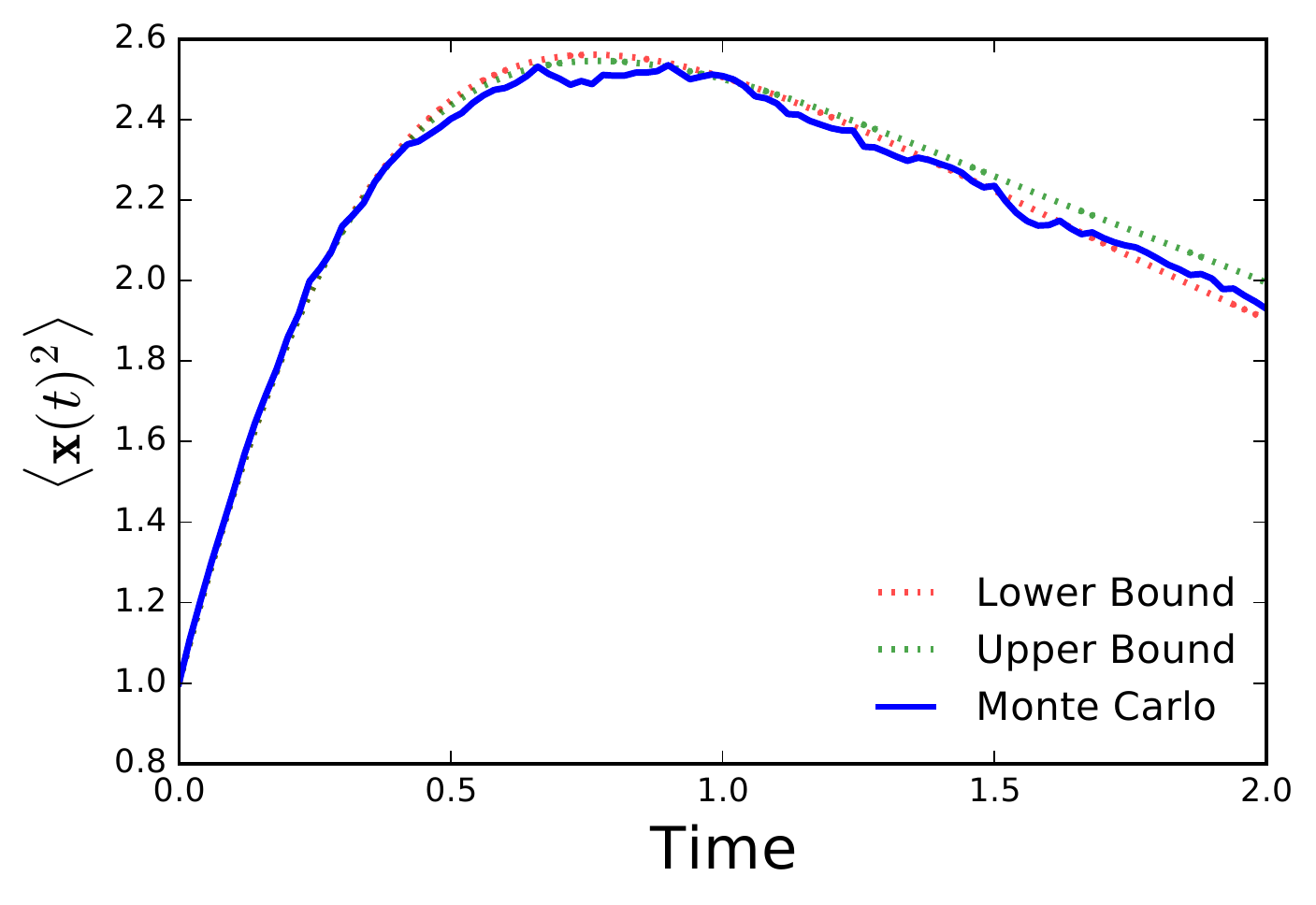}
    \caption{
      \label{fig:logistic}
      The second moment of the logistic model,
      \eqref{eq:stochLog}, using parameters $a_1 = 3$, $b_1=1$,
      $a_2=1$, $b_2=0$ and initial condition $\x(0)=1$. By maximizing and minimizing the bounds on
      $\langle \x(T)^2 \rangle$ using the SDP from
      Theorem~\ref{thm:finiteHorizon}, we get upper and lower bounds
      on the true value. The result of $5000$ runs of the model are
      also shown. While we only penalized the final value of $\langle
      \x(T)^2 \rangle$, the upper and lower bound trajectories have
      similar values over most of the time horizon. Furthermore, these
      trajectories both give good approximations to the average
      trajectory found by Monte Carlo simulations. 
    }
  \end{figure}
    Recall the stochastic logistic model from \eqref{eq:stochLog}. We
  used Theorem~\ref{thm:finiteHorizon} to compute upper and lower bounds on the 
  second moment at the final time, $\langle \x(T)^2 \rangle$. 
  See Fig.~\ref{fig:logistic}. Surprisingly, the bounds give a good
  approximation to $\langle \x(t)^2 \rangle$ for all $t\in [0,T]$,
  despite only optimizing the bound at the final time, $T$. 

  \end{example}

\begin{example}[Randomly Sampled Feedback -- Continued]
  \begin{figure}
    \centering
    \includegraphics[width=.5\textwidth]{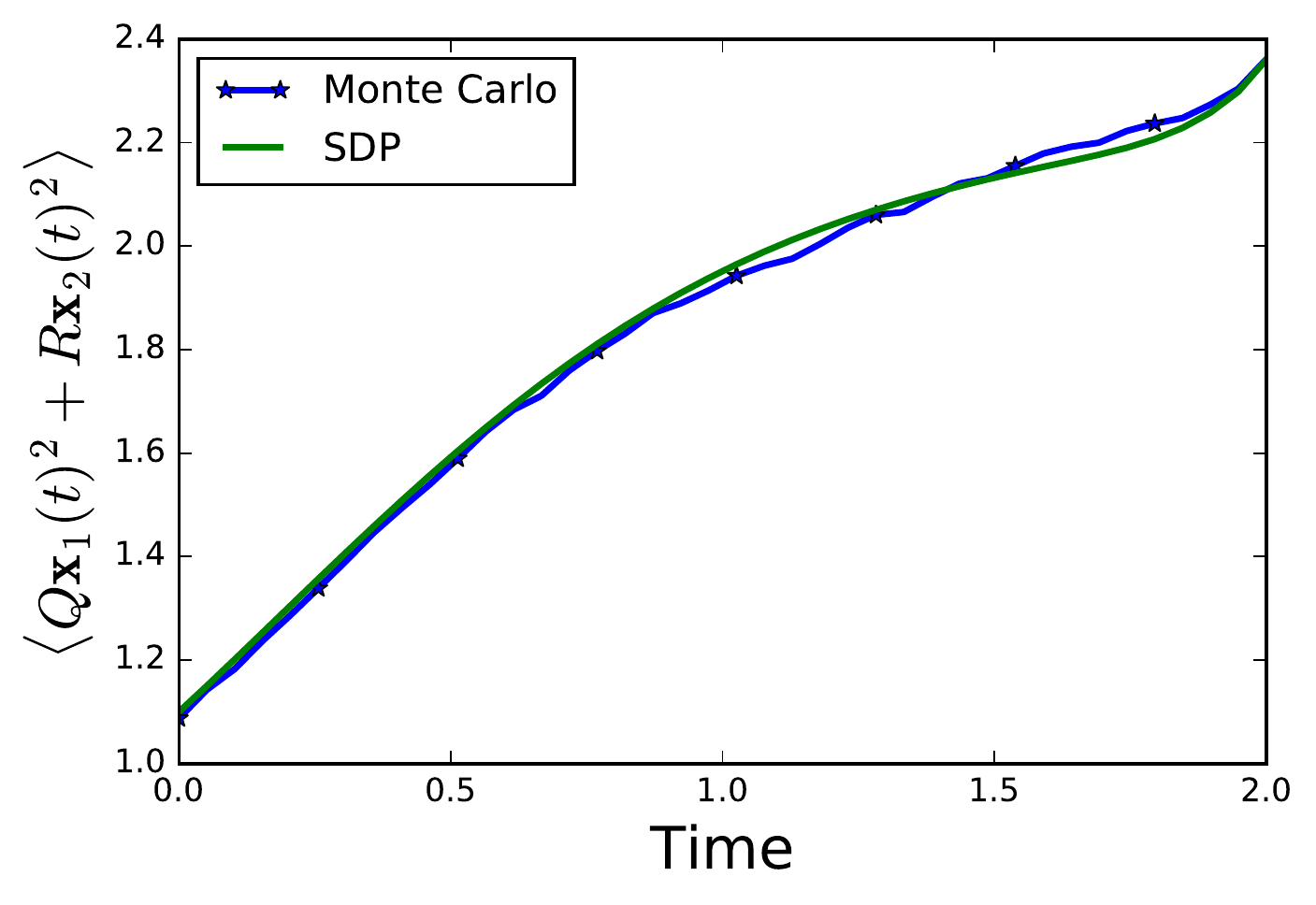}
    \caption{\label{fig:jumpControl} This shows the cost
      $\langle Q\x_1(t)^2 + R\x_2(t)^2\rangle$ averaged over $5000$
      runs of the randomly sampled system defined by
      \eqref{eq:sampledSys} with $Q=1$ and $R = 0.1$. With no feedback, the second moment of $\x_1(t)$ would
      increase linearly from $1$ to $3$, and so we would see the final
    value of the cost to be at least $3$. Note that the SDP cost and
    the Monte Carlo cost are very close. This is to be expected
    because the moments are closed in this example.}
\end{figure}
Recall the sampled-data system from \eqref{eq:sampledSys} and
\eqref{eq:sampledCost}. For $Q=1$ and $R=0.1$, the lower bound from
\eqref{eq:SDP} is $3.67$. A linear controller was constructed using
the method from Subsection~\ref{sec:controller}. This problem has
closed moments at second order, and thus we would expect our method to
find the exact solution. Fig.~\ref{fig:jumpControl} shows the cost
computed from the SDP as well as the cost found by simulating the
computed controller $5000$ times. As expected, the trajectories are
very close. 
\end{example}

\begin{example}[Jump Rate Control -- Continued]
  Recall the jump rate control problem from
  Examples~\ref{ex:rateControl} and
  \ref{ex:jumpAux}. Fig.~\ref{fig:rateControl} shows how the control
  strategy and performance vary as the size of the auxiliary control
  problem increases. As the order of the auxiliary problem increases,
  the computed policy begins to resemble an approximate
  event-triggered policy \eqref{eq:optJumpStrategy} as predicted from
  the HJB equation, \eqref{eq:jumpHJB}. Furthermore, as the degree of
  the controller increases, the computed costs appears to approach the
  steady-state cost predicted from the SDP. 
  
  \begin{figure}
  \centering
  \begin{minipage}[b]{.45\textwidth}
    \centering
    \includegraphics[width=\textwidth]{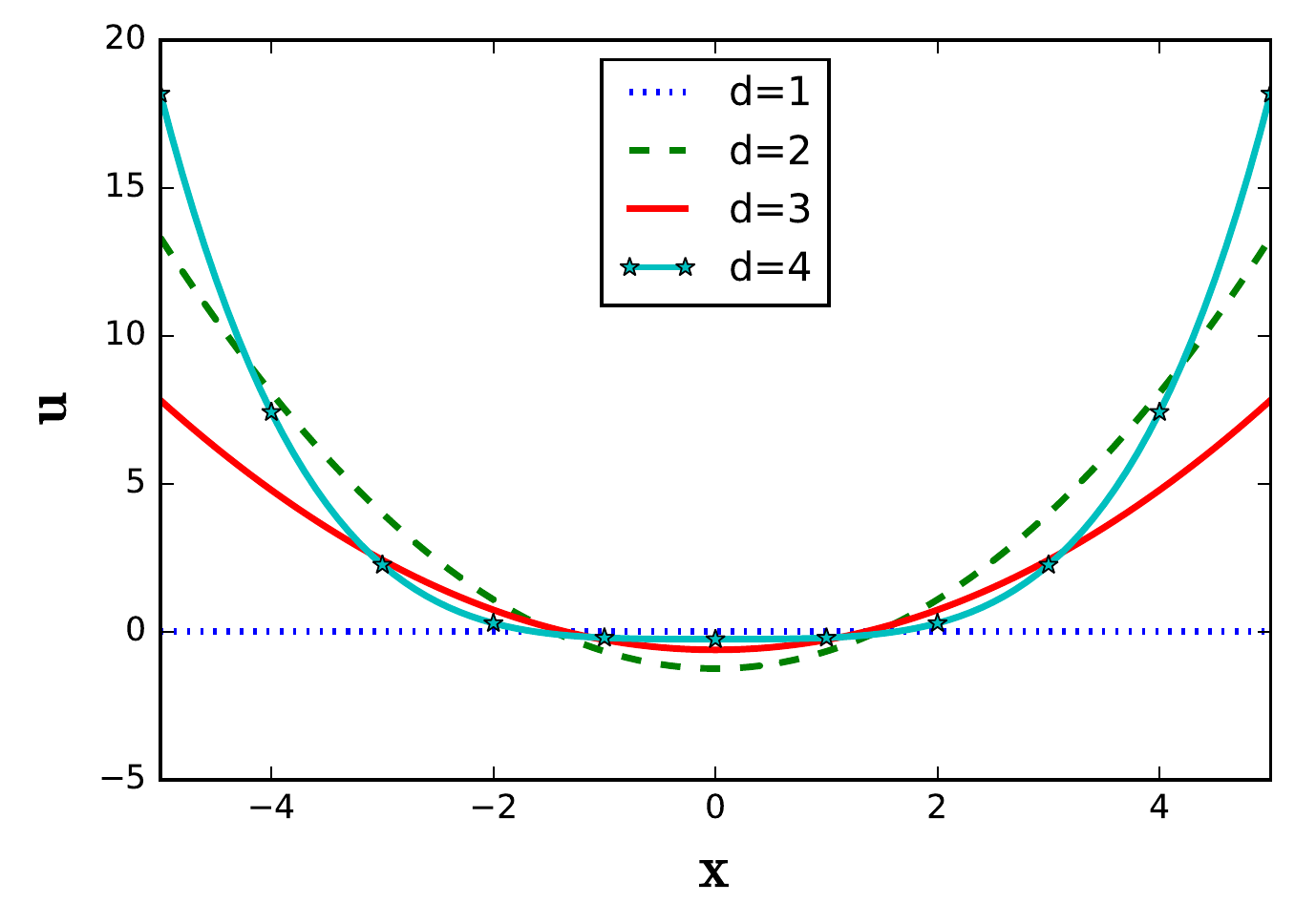}
    \subcaption{\label{fig:rateStrategies} Jump Rate Strategies}
  \end{minipage}
  \begin{minipage}[b]{.45\textwidth}
    \centering
    \includegraphics[width=\textwidth]{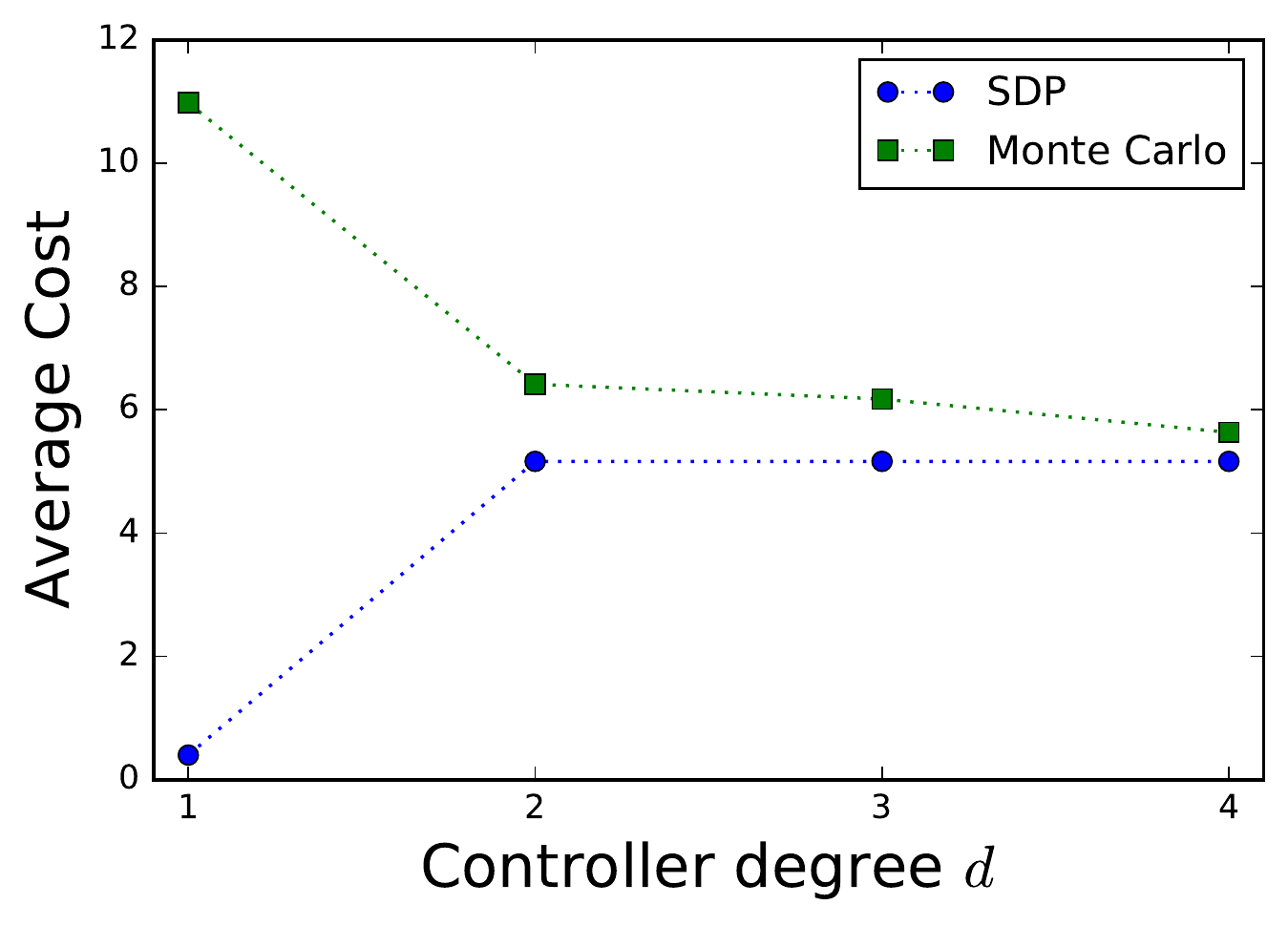}
    \subcaption{\label{fig:rateCosts} Average Cost}
  \end{minipage}
  \caption{\label{fig:rateControl} Fig~\ref{fig:rateStrategies} shows
    the control policies computed as in
    Subsection~\ref{sec:controller} for various 
    orders. Here $Q=1$, $R=10$ and the upper bound on
    the jump rate $\Omega = 10$. 
    Recall from Example~\ref{ex:rateControl} that the true optimal
    controller is an approximate event-trigger control
    strategy. Specifically, the true optimal policy maintains a zero
    jump rate for $\x(t)$ in some region, and then transitions to a
    maximum jump rate of $\u(t) = \Omega=100$ when $\x(t)$ crosses the
    boundary. As the degree of the controller increases, the strategy
    becomes flatter near $\x(t)=0$ and then increases rapidly for
    $|\x(t)| > 5$. As can be seen, the least squares method does not
    enforce the constraint that $0\le \u(t) \le \Omega = 10$. To get a
    feasible controller, the value is simply clipped to stay in the
    correct range. 
    Fig~\ref{fig:rateCosts} shows the lower bound from the
    SDP, and compares it with the upper bound given by the feasible
    controller strategies. As can be seen, the lower bound stops
    increasing by degree $2$. However, the control strategy from
    Fig~\ref{fig:rateStrategies} continues to change. The average
    values from control lead to increasingly tight bounds. The average
    costs were averaged over $5000$ runs of length $5s$, at a sampling
    rate of $100Hz$. 
  } 
\end{figure}
\end{example}

\begin{example}[Fishery Management -- Continued]
  \begin{figure}
    \centering
    \begin{minipage}[b]{.45\textwidth}
      \includegraphics[width=\textwidth]{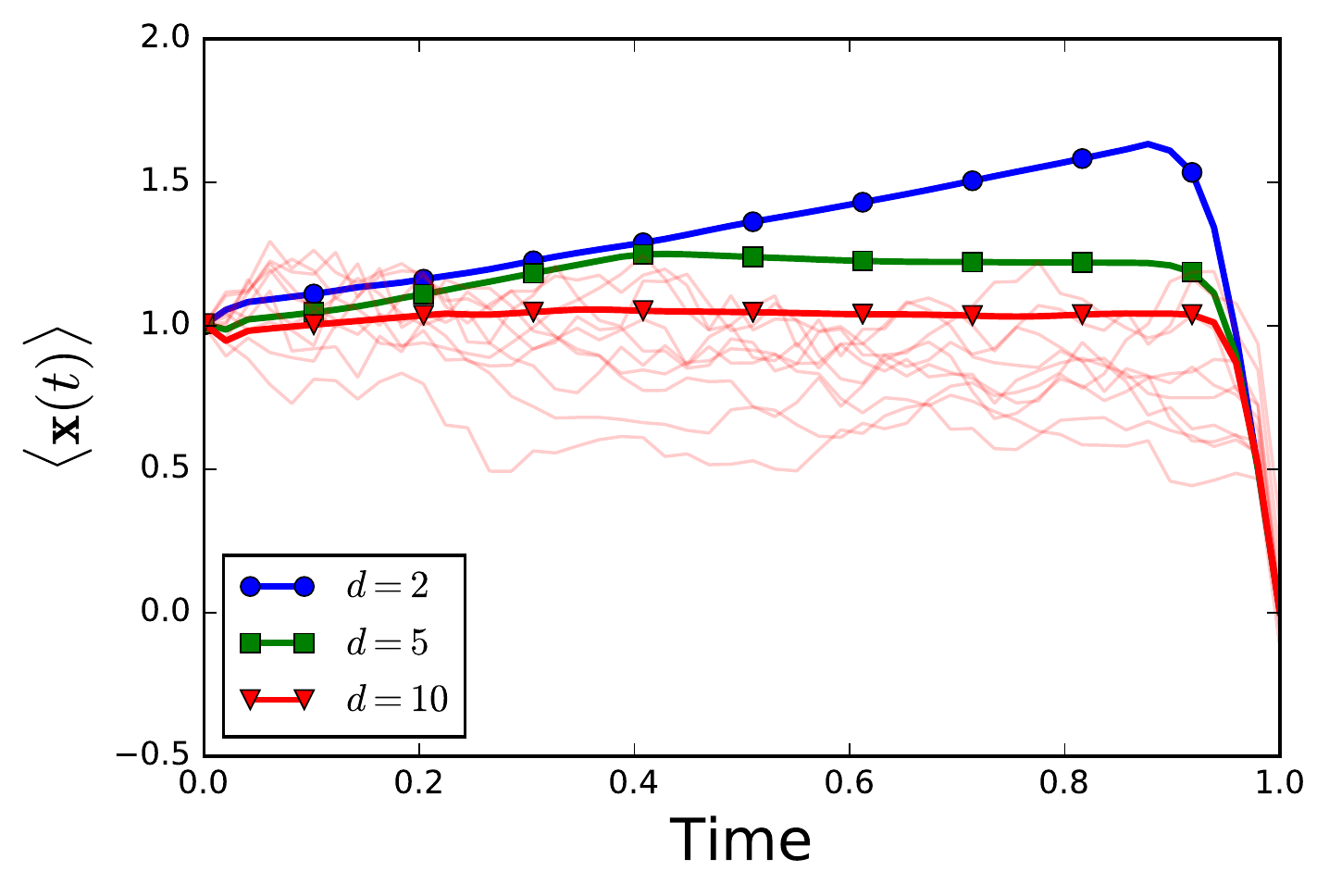}
      \subcaption{\label{fig:harvestPop} Harvesting Population}
    \end{minipage}
    \begin{minipage}[b]{.45\textwidth}
      \includegraphics[width=\textwidth]{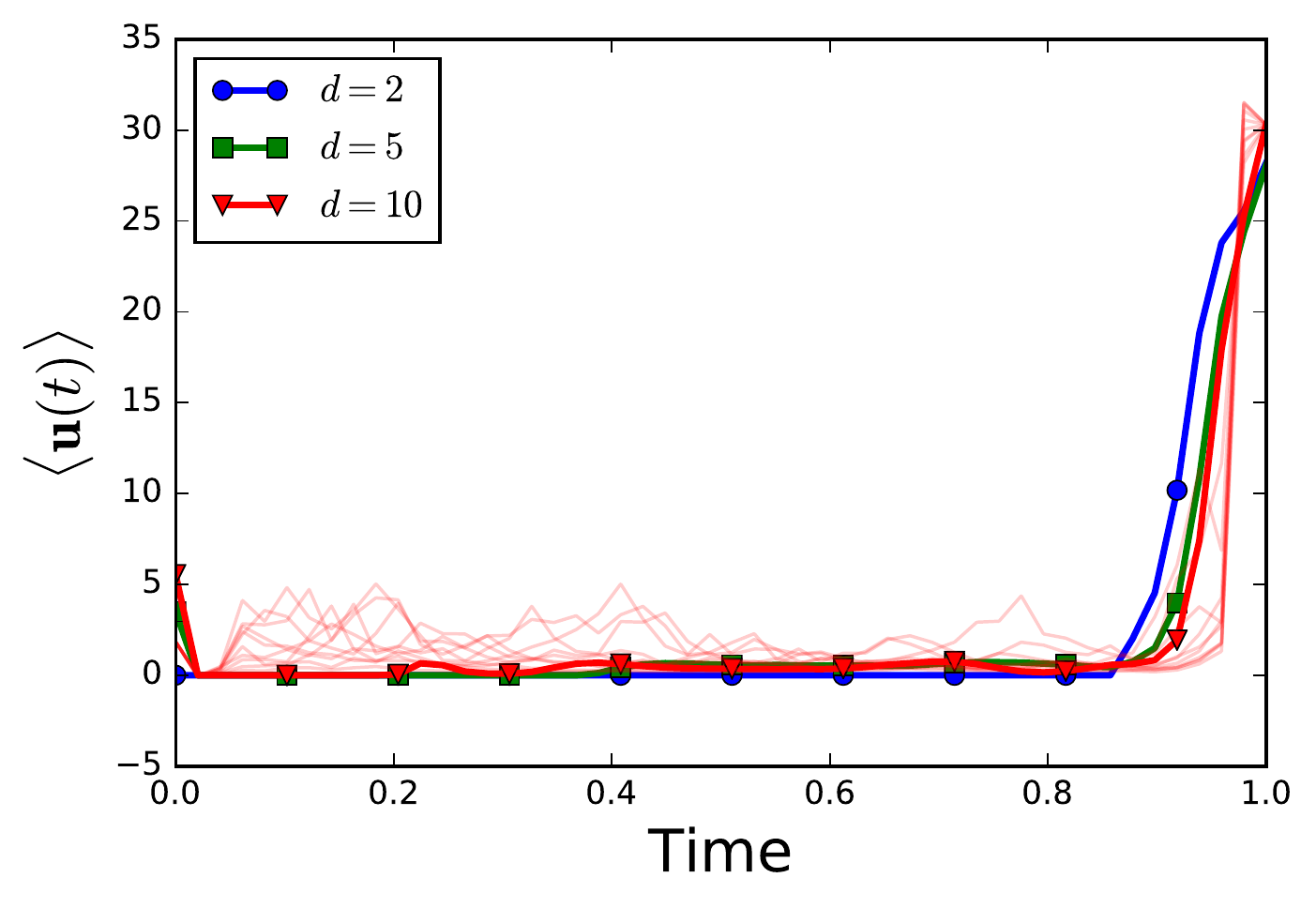}
      \subcaption{\label{fig:harvestStrat} Harvesting Strategy}
    \end{minipage}
    \caption{\label{fig:harvest} Figs~\ref{fig:harvestPop} and
      \ref{fig:harvestStrat} the solutions to the optimal harvesting
      problem. As the order of the controller increases, the following
    strategy emerges. The controller appears to hold the population at
  near $\x(t)=1$ for most of the time. Then, at the end of the
  horizon, the population is harvested to extinction. }
  \end{figure}

  In this example, the smallest upper bound on the total harvest was computed to be $2.12$,
  while the lower bound found be simulating the corresponding
  controller $1000$ times was $1.62$. Thus, the true optimal harvest
  will likely be in the interval $[1.62,2.12]$.
  An exact lower bound
  is not known, since the lower bound was estimated by random
  sampling. In this example, non-negativity of the state and input
  were enforced using the vector inequality constraint defined in
  Lemma~\ref{lem:vec}.

  An interesting control strategy appears to emerge, whereby
  the population is held constant for most of the interval and then
  fished to extinction at the end of the horizon. 
\end{example}

\section{Conclusion}
\label{sec:conclusion}

This paper presented a method based on semidefinite programming for
computing bounds on stochastic process moments and stochastic optimal
control problems in a unified manner. The method is flexible, in that
it can be applied to stochastic differential equations, jump processes
and mixtures of  the two. Furthermore, systems with and without
control inputs can be handled in the same way. The key insight behind
the method is the interpretation of the dynamics of the moments as a
linear control problem. The auxiliary state consists of a collection of moments
of the original state, while the auxiliary input consists of higher order
moments required for the auxiliary state, as well as any terms
involving inputs to the original system. Then all of the desired
bounds can be computed in terms of optimal control problems on this
auxiliary system.

Future work will focus on algorithmic improvements and theoretical
extensions. A simple algorithmic extension would be to use more
general polynomials in the state and input vectors, as opposed to
simple monomials. In particular, this would enable the user to choose
basis polynomials that offer better numerical stability than the
monomials. Methods for automatically constructing the state vectors and
constraint matrices are also desirable. 
In this paper, all of the SDPs were solved using CVXPY \cite{diamondcvxpy2016} in
conjunction with off-the-shelf SDP solvers
\cite{donoghueconic2016,dahl2008cvxopt}. However, the SDP from
\eqref{eq:SDP} has the specialized structure of a linear optimal
control problem with LMI constraints. A specialized solver could
potentially exploit this structure and enable better scaling.
Theoretically, a proof of convergence of the bounds is desirable. For
uncontrolled problems, we conjecture that as long as the moments are all finite,
the upper and lower bounds converge to the true value. Similarly, for
stochastic control problems, we conjecture that the lower bound
converges to the true optimal value.
Furthermore, the general methodology could potentially be extended to
other classes of functions beyond polynomials. In particular, mixtures
of polynomials and trigonometric functions seem to be tractable. More
generally, it is likely that the method will extend to arbitrary basis
function sets that are closed under addition, multiplication, and differentiation.

\bibliography{bibLoc}

\begin{thebibliography}{10}
\providecommand{\url}[1]{#1}
\csname url@rmstyle\endcsname
\providecommand{\newblock}{\relax}
\providecommand{\bibinfo}[2]{#2}
\providecommand\BIBentrySTDinterwordspacing{\spaceskip=0pt\relax}
\providecommand\BIBentryALTinterwordstretchfactor{4}
\providecommand\BIBentryALTinterwordspacing{\spaceskip=\fontdimen2\font plus
\BIBentryALTinterwordstretchfactor\fontdimen3\font minus
  \fontdimen4\font\relax}
\providecommand\BIBforeignlanguage[2]{{%
\expandafter\ifx\csname l@#1\endcsname\relax
\typeout{** WARNING: IEEEtran.bst: No hyphenation pattern has been}%
\typeout{** loaded for the language `#1'. Using the pattern for}%
\typeout{** the default language instead.}%
\else
\language=\csname l@#1\endcsname
\fi
#2}}

\bibitem{oksendal03}
B.~{\O}ksendal, \emph{Stochastic differential equations}.\hskip 1em plus 0.5em
  minus 0.4em\relax Springer, 2003.

\bibitem{oksendalapplied2009}
B.~{\O}ksendal and A.~Sulem, \emph{Applied Stochastic Control of Jump
  Diffusions}, 3rd~ed.\hskip 1em plus 0.5em minus 0.4em\relax Springer, 2009.

\bibitem{malliaris82}
A.~G. Malliaris, \emph{Stochastic methods in economics and finance}.\hskip 1em
  plus 0.5em minus 0.4em\relax North-Holland, 1982, vol.~17.

\bibitem{hanson2007applied}
F.~B. Hanson, \emph{Applied stochastic processes and control for
  Jump-diffusions: modeling, analysis, and computation}.\hskip 1em plus 0.5em
  minus 0.4em\relax Siam, 2007, vol.~13.

\bibitem{allen07}
E.~Allen, \emph{Modeling with It{\^o} stochastic differential equations}.\hskip
  1em plus 0.5em minus 0.4em\relax Springer Science \& Business Media, 2007,
  vol.~22.

\bibitem{theodorougeneralized2010}
\BIBentryALTinterwordspacing
E.~Theodorou, J.~Buchli, and S.~Schaal, ``A generalized path integral control
  approach to reinforcement learning,'' \emph{J. Mach. Learn. Res.}, vol.~11,
  pp. 3137--3181, Dec. 2010. [Online]. Available:
  \url{http://dl.acm.org/citation.cfm?id=1756006.1953033}
\BIBentrySTDinterwordspacing

\bibitem{roberts1996exponential}
G.~O. Roberts and R.~L. Tweedie, ``Exponential convergence of langevin
  distributions and their discrete approximations,'' \emph{Bernoulli}, pp.
  341--363, 1996.

\bibitem{girolami2011riemann}
M.~Girolami and B.~Calderhead, ``Riemann manifold langevin and hamiltonian
  monte carlo methods,'' \emph{Journal of the Royal Statistical Society: Series
  B (Statistical Methodology)}, vol.~73, no.~2, pp. 123--214, 2011.

\bibitem{kushnerstochastic2003}
H.~Kushner and G.~G. Yin, \emph{Stochastic approximation and recursive
  algorithms and applications}.\hskip 1em plus 0.5em minus 0.4em\relax Springer
  Science \& Business Media, 2003, vol.~35.

\bibitem{naasell03}
I.~N{\aa}sell, ``An extension of the moment closure method,'' \emph{Theoretical
  population biology}, vol.~64, no.~2, pp. 233--239, 2003.

\bibitem{singhderivative2007}
A.~Singh and J.~P. Hespanha, ``A derivative matching approach to moment closure
  for the stochastic logistic model,'' \emph{Bulletin of mathematical biology},
  vol.~69, no.~6, pp. 1909--1925, 2007.

\bibitem{Kuehn16}
C.~Kuehn, \emph{Moment Closure--A Brief Review}, ser. Understanding Complex
  Systems, E.~Sch{\"o}ll, S.~H. Klapp, and P.~H{\"o}vel, Eds.\hskip 1em plus
  0.5em minus 0.4em\relax Springer, 2016.

\bibitem{alvarezoptimal1998}
L.~H. Alvarez and L.~A. Shepp, ``Optimal harvesting of stochastically
  fluctuating populations,'' \emph{Journal of Mathematical Biology}, vol.~37,
  no.~2, pp. 155--177, 1998.

\bibitem{lunguoptimal1997}
\BIBentryALTinterwordspacing
E.~Lungu and B.~{\O}ksendal, ``Optimal harvesting from a population in a
  stochastic crowded environment,'' \emph{Mathematical Biosciences}, vol. 145,
  no.~1, pp. 47 -- 75, 1997. [Online]. Available:
  \url{http://www.sciencedirect.com/science/article/pii/S0025556497000291}
\BIBentrySTDinterwordspacing

\bibitem{socha2008linearization}
L.~Socha, \emph{Linearization Methods for Stochastic Dynamic Systems}, ser.
  Lecture Notes in Physics 730.\hskip 1em plus 0.5em minus 0.4em\relax
  Springer-Verlag, Berlin Heidelberg, 2008.

\bibitem{SinghHespanhaLogNormal}
A.~Singh and J.~P. Hespanha, ``Lognormal moment closures for biochemical
  reactions,'' in \emph{Proceedings of the 45th Conference on Decision and
  Control}, 2006, pp. 2063--2068.

\bibitem{SinghHespanhaDM}
------, ``Approximate moment dynamics for chemically reacting systems,''
  \emph{IEEE Transactions on Automatic Control}, vol.~56, no.~2, pp. 414--418,
  2011.

\bibitem{soltani2015conditional}
M.~Soltani, C.~A. Vargas-Garcia, and A.~Singh, ``Conditional moment closure
  schemes for studying stochastic dynamics of genetic circuits,''
  \emph{Biomedical Circuits and Systems, IEEE Transactions on}, vol.~9, no.~4,
  pp. 518--526, 2015.

\bibitem{lamperski2016stochastic}
A.~Lamperski, K.~R. Ghusinga, and A.~Singh, ``Stochastic optimal control using
  semidefinite programming for moment dynamics,'' in \emph{Decision and Control
  (CDC), 2016 IEEE 55th Conference on}.\hskip 1em plus 0.5em minus 0.4em\relax
  IEEE, 2016, pp. 1990--1995.

\bibitem{KuntzStatMoments16}
J.~Kuntz, M.~Ottobre, G.-B. Stan, and M.~Barahona, ``Bounding stationary
  averages of polynomial diffusions via semidefinite programming,'' \emph{SIAM
  Journal on Scientific Computing}, vol.~38, no.~6, pp. A3891--A3920, 2016.

\bibitem{bertsekasdynamic1995}
D.~P. Bertsekas, \emph{Dynamic programming and optimal control}.\hskip 1em plus
  0.5em minus 0.4em\relax Athena Scientific Belmont, MA, 1995, vol.~1, no.~2.

\bibitem{flemingcontrolled2006}
W.~H. Fleming and H.~M. Soner, \emph{Controlled Markov Processes and Viscosity
  Solutions}, 2nd~ed.\hskip 1em plus 0.5em minus 0.4em\relax Springer, 2006.

\bibitem{bertsekasdynamic2012}
D.~P. Bertsekas, \emph{Dynamic Programming and Optimal Control: Approximate
  Dynamic Programming}.\hskip 1em plus 0.5em minus 0.4em\relax Athena
  Scientific, 2012, vol.~2.

\bibitem{lasserrenonlinear2008}
J.~B. Lasserre, D.~Henrion, C.~Prieur, and E.~Tr\'elat, ``Nonlinear optimal
  control via occupation measures and {LMI}-relaxations,'' \emph{SIAM Journal
  of COntrol and Optimization}, vol.~47, no.~4, pp. 1643 -- 1666, 2008.

\bibitem{bhatt1996occupation}
A.~G. Bhatt and V.~S. Borkar, ``Occupation measures for controlled markov
  processes: Characterization and optimality,'' \emph{The Annals of
  Probability}, pp. 1531--1562, 1996.

\bibitem{vinter1993convex}
R.~Vinter, ``Convex duality and nonlinear optimal control,'' \emph{SIAM journal
  on control and optimization}, vol.~31, no.~2, pp. 518--538, 1993.

\bibitem{lasserre2001global}
J.~B. Lasserre, ``Global optimization with polynomials and the problem of
  moments,'' \emph{SIAM Journal on Optimization}, vol.~11, no.~3, pp. 796--817,
  2001.

\bibitem{rogers2007pathwise}
L.~Rogers, ``Pathwise stochastic optimal control,'' \emph{SIAM Journal on
  Control and Optimization}, vol.~46, no.~3, pp. 1116--1132, 2007.

\bibitem{brown2014information}
D.~B. Brown and J.~E. Smith, ``Information relaxations, duality, and convex
  stochastic dynamic programs,'' \emph{Operations Research}, vol.~62, no.~6,
  pp. 1394--1415, 2014.

\bibitem{jumarie1995practical}
G.~Jumarie, ``A practical variational approach to stochastic optimal control
  via state moment equations,'' \emph{Journal of the Franklin Institute}, vol.
  332, no.~6, pp. 761--772, 1995.

\bibitem{jumarie1996improvement}
------, ``Improvement of stochastic neighbouring-optimal control using
  nonlinear gaussian white noise terms in the taylor expansions,''
  \emph{Journal of the Franklin Institute}, vol. 333, no.~5, pp. 773--787,
  1996.

\bibitem{heemels2012introduction}
W.~Heemels, K.~H. Johansson, and P.~Tabuada, ``An introduction to
  event-triggered and self-triggered control,'' in \emph{Decision and Control
  (CDC), 2012 IEEE 51st Annual Conference on}.\hskip 1em plus 0.5em minus
  0.4em\relax IEEE, 2012, pp. 3270--3285.

\bibitem{tabuada2007event}
P.~Tabuada, ``Event-triggered real-time scheduling of stabilizing control
  tasks,'' \emph{IEEE Transactions on Automatic Control}, vol.~52, no.~9, pp.
  1680--1685, 2007.

\bibitem{raosurvey2009}
A.~V. Rao, ``A survey of numerical methods for optimal control,''
  \emph{Advances in the Astronautical Sciences}, vol. 135, no.~1, pp. 497--528,
  2009.

\bibitem{diamondcvxpy2016}
\BIBentryALTinterwordspacing
S.~Diamond and S.~Boyd, ``{CVXPY}: A {P}ython-embedded modeling language for
  convex optimization,'' 2016, to appear in Journal of Machine Learning
  Research. [Online]. Available:
  \url{http://stanford.edu/~boyd/papers/pdf/cvxpy_paper.pdf}
\BIBentrySTDinterwordspacing

\bibitem{donoghueconic2016}
B.~O’Donoghue, E.~Chu, N.~Parikh, and S.~Boyd, ``Conic optimization via
  operator splitting and homogeneous self-dual embedding,'' 2016, to appear in
  Journal of Optimization Theory and Applications.

\bibitem{dahl2008cvxopt}
\BIBentryALTinterwordspacing
J.~Dahl and L.~Vandenberghe, ``{CVXOPT}: A python package for convex
  optimization,'' 2008. [Online]. Available:
  \url{http://www.abel.ee.ucla.edu/cvxopt}
\BIBentrySTDinterwordspacing

\end{thebibliography}
\end{document}